\def\row#1/#2!{#1_{\IfStrEq{#2}{}{n}{#2}} & \dynkin{#1}{#2}\\}
\newtheorem{theorem}{Theorem}
\newtheorem{corollary}{Corollary}
\newtheorem{proposition}{Proposition}
\newtheorem{lemma}{Lemma}
\newtheorem{problem}{Problem}
\newtheorem{remark}{Remark}
\newtheorem{definition}{Definition}
\def \T{\textup{T}}
\def \rank{\textup{rank}}
\def \diag{\textup{diag~}}
\def \Im{\textup{Im}}
\def \Ker{\textup{Ker}}
\def \span{\textup{span}}
\newcommand{\rmnum}[1]{\romannumeral #1}
\newcommand\restr[2]{{
		\left.\kern-\nulldelimiterspace 
		#1 
		\right|_{#2} 
}}
\newcommand{\Rmnum}[1]{\expandafter\@slowromancap\romannumeral #1@}
\title{The Smith normal form of the walk matrix of the Dynkin graph $D_n$ for $n\equiv 0\pmod{4}$}
\author{\small Wei Wang\thanks{Corresponding author: wangwei.math@gmail.com}
\\
{\footnotesize$^{\rm a}$School of Mathematics, Physics and Finance, Anhui Polytechnic University, Wuhu 241000, P. R. China}
}
\date{}
\begin{document}
	\maketitle
	
	\begin{abstract}
		Let $W(D_n)$ denote the walk matrix of the Dynkin graph $D_n$. We prove that the Smith normal form of $W(D_n)$ is $$\diag[\underbrace{1,1,\ldots,1}_{\frac{n}{2}-1},\underbrace{2,2,\ldots,2}_{\frac{n}{2}-1},0,0]$$  when $n\equiv 0\pmod{4}$. This gives an affirmative answer to a question in  [W.~Wang, C.~Wang, S.~Guo, On the walk matrix of  the Dynkin graph $D_n$, Linear Algebra Appl.  653 (2022) 193--206].\\
		
		\noindent\textbf{Keywords}: walk matrix; Smith normal form; Dynkin graph; Chebyshev polynomial\\
		
		\noindent
		\textbf{AMS Classification}: 05C50
	\end{abstract}
	\section{Introduction}
	For an $n$-vertex graph with adjacency matrix $A$, the \emph{walk matrix} of $G$ is
	\begin{equation}
	W:=[e,Ae,\ldots,A^{n-1}e],
	\end{equation}
	where $e$ is the all-one vector. This particular kind of matrices has received increasing attention in recent studies as it is closely related to many interesting properties of graphs, such as the controllability of graphs \cite{godsil2010} and the  spectral determination of graphs \cite{wang2006PHD,wang2014Ejc,wang2017JCTB,qiu2023DM,wang2021Eujc,wang2023Eujc}.
	
	For any $n\times n$ integral matrix $M$, there exist two unimodular matrix $P$ and $Q$ such that $PMQ$ has a diagonal form $\diag[d_1,d_2,\ldots,d_n]$ with  $d_i\mid d_{i+1}$ for $i=1,\ldots,n-1$. We always assume that each $d_i$ is nonnegative and make the usual convention that $0\mid 0$. The diagonal matrix $\diag[d_1,d_2,\ldots,d_n]$ is called the \emph{Smith normal form} of $M$; $d_i$ is called the $i$-th \emph{invariant factor} of $M$. These invariant factors are unique, in fact, $d_i=\Delta_i/\Delta_{i-1}$, where $\Delta_i$, called the $i$-th \emph{determinantal factor}, is the greatest common divisor of all $i\times i$ minors of $M$ and $\Delta_0=1$ by convention. We note that the Smith normal form of $M$ is a refinement of $\det M$ and $\rank~M$, the determinant and  the rank of $M$. Precisely, $\det M=\pm d_1 d_2\cdots d_n$ and $\rank\,M=\max\{i\colon\,d_i\neq 0\}$. Moreover, for any prime $p$, $\rank_p M=\max\{i\colon\, d_i\not\equiv 0\pmod{p}\}$, where $\rank_p M$  denotes the rank of $M$ over the finite field $\mathbb{F}_p$.	
	
It is a classic problem to study the Smith normal form of various kinds of integral matrices associated with graphs. Although there have been many results on Smith normal forms of adjacency matrices and Laplacian matrices (see e.g. \cite{chandler2017DCC,lorenzini2008JCTB,hou2011ACS,pantangi2019JCTA}), the corresponding problem for walk matrices is still on an early stage due to its difficulty. For any graph $G$ with $n$ vertices,  Wang \cite{wang2014Ejc} showed that $\rank_2 W(G)\le  \lceil\frac{n}{2}\rceil$, i.e., at most $\lceil\frac{n}{2}\rceil$ invariant factors of $W(G)$ are odd.  Furthermore, Choi et.~al \cite{choi2021LAA} showed that, for any positive integer $k$, at most $\lfloor\frac{n}{2}\rfloor$ invariant factors of $W(G)$ are congruent to $2^k$ modulo $2^{k+1}$; this was conjectured in \cite{wang2021LAA} where the special case $k=1$ was proved. In a more recent paper, Wang et.~al \cite{wang2022LAA} determined the Smith normal form of the walk matrix of the Dynkin graph $D_n$ under the assumption that $4\nmid n$.

\begin{figure}
	\centering
	\includegraphics[height=2cm]{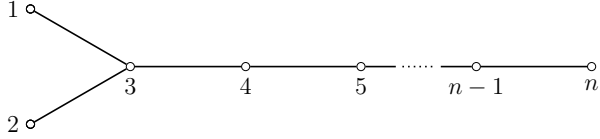}
	\caption{Graph ${D}_n(n\ge 4)$.}
\end{figure}
\begin{theorem}[\cite{wang2022LAA}]\label{snfd}
		If $4\nmid n$ then $W(D_n)$  has the Smith normal form $$\diag[\underbrace{1,1,\ldots,1}_{\lceil\frac{n}{2}\rceil},\underbrace{2,2,\ldots,2}_{\lfloor\frac{n}{2}\rfloor-1},0].$$
\end{theorem}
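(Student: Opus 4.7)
My plan is to combine the $\mathbb{Z}/2$-symmetry of $D_n$, the two general $2$-adic bounds on walk matrices recalled in the introduction, and a direct Chebyshev evaluation of one crucial determinant. The hypothesis $4\nmid n$ will be used only at the last step.

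First I would exploit the involution $\tau$ of $D_n$ that swaps the two pendant leaves $u,v$ adjacent to the branching vertex and fixes all other vertices; let $P$ be its permutation matrix. Since $PA=AP$ and $Pe=e$, every column of $W(D_n)$ lies in the $(n-1)$-dimensional $+1$-eigenspace of $P$, so $\rank W(D_n)\le n-1$. Choosing the basis $\{e_u+e_v\}\cup\{e_w:w\neq u,v\}$ of this eigenspace turns the projected walk matrix into, up to scaling of one row, the walk matrix of a weighted path on $n-1$ vertices, whose $(n-1)\times(n-1)$ minor can be computed by a three-term recurrence and identified with a Chebyshev-polynomial value. The absolute value of this minor will turn out to be a power of $2$, which simultaneously yields $\rank W(D_n)=n-1$, produces the single zero invariant factor, and certifies that no odd prime divides any nonzero $d_i$.

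Next I would invoke the general bounds. By \cite{wang2014Ejc} at most $\lceil n/2\rceil$ invariant factors of $W(D_n)$ are odd, and by the $k=1$ case of \cite{choi2021LAA} at most $\lfloor n/2\rfloor$ are congruent to $2$ modulo $4$. With the $n-1$ nonzero invariants now known to be powers of $2$ whose product has $2$-adic valuation $\lfloor n/2\rfloor-1$ (read off from the Chebyshev minor above), the divisibility chain $d_i\mid d_{i+1}$ leaves essentially no room: any $d_i\ge 4$ would either shorten the odd block below $\lceil n/2\rceil$ or the $2\pmod 4$ block below $\lfloor n/2\rfloor-1$ while preserving the valuation sum, contradicting one of the two bounds. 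This forces the SNF to be $\underbrace{1,\ldots,1}_{\lceil n/2\rceil},\underbrace{2,\ldots,2}_{\lfloor n/2\rfloor-1},0$ as claimed.

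The main obstacle I anticipate is the $2$-adic bookkeeping in the Chebyshev step. The eigenvalues of $A(D_n)$ are explicit cosines, and the minor above factors as a product of Chebyshev-type expressions at specific roots of unity whose $2$-adic valuations depend sensitively on $n\bmod 4$. When $4\nmid n$ the count is uniform enough to reach exactly $\lfloor n/2\rfloor-1$, but when $4\mid n$ one of the Chebyshev factors vanishes, creating an extra zero invariant factor and redistributing the remaining $2$-adic weights. Isolating the good behaviour when $4\nmid n$ is the technical crux, and the failure of this rigidity in the complementary case is precisely what motivates the present paper.
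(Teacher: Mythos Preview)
The paper does not actually prove this theorem; it is quoted from \cite{wang2022LAA} as background, and the present paper treats only the complementary case $4\mid n$. So there is no in-paper proof to compare against. Your outline is nonetheless very close in structure to what the present paper does for $4\mid n$ (Lemma~\ref{clue} together with Theorems~\ref{rk2} and \ref{up}): bound $\rank_2 W$ from above, exhibit one large minor equal to $\pm 2^{\lfloor n/2\rfloor-1}$ via an eigenvector/Chebyshev computation on a reduced matrix coming from a symmetry of $D_n$, and then pin down the invariant factors by a counting argument.

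Your final ``squeezing'' step, however, is misargued. The results from \cite{wang2014Ejc} and \cite{choi2021LAA} are \emph{upper} bounds on the number of invariant factors congruent to $1$, respectively to $2$, modulo higher powers of $2$; a hypothetical $d_i\ge 4$ would only make those blocks shorter, which cannot contradict an upper bound. The correct squeeze, exactly parallel to Lemma~\ref{clue}, needs only the single inequality $r:=\rank_2 W(D_n)\le\lceil n/2\rceil$. Writing the $n-1$ nonzero invariant factors as $r$ ones, $s$ twos, and $t$ factors that are $\ge 4$, one has $s+t=n-1-r\ge\lfloor n/2\rfloor-1$; on the other hand a single $(n-1)\times(n-1)$ minor equal to $\pm 2^{\lfloor n/2\rfloor-1}$ yields only $\Delta_{n-1}\mid 2^{\lfloor n/2\rfloor-1}$ (one minor bounds the gcd from above, not below), hence $s+2t\le\lfloor n/2\rfloor-1$. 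Subtracting forces $t=0$ and with it the claimed shape, so the bound from \cite{choi2021LAA} is not needed at all. The genuine work remains the Chebyshev evaluation you correctly flag as the crux, which is only sketched here.
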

For the case that $n\equiv 0\pmod{4}$, only the rank of $W(D_n)$ is determined.
\begin{proposition}[\cite{wang2022LAA}]\label{rkW}
If $4\mid n$ then $\rank~W(D_n)=n-2$, or equivalently, exactly the last two invariant factors are zero.
\end{proposition}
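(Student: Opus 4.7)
The plan is to exploit the reflection symmetry of $D_n$ to reduce the rank question to an explicit eigenvalue/eigenvector analysis of a weighted tridiagonal matrix, and then detect orthogonality between $e$ and each eigenvector via product-to-sum identities.

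First I would pass to symmetric coordinates. The involution $\sigma$ swapping the two pendant vertices $n-1$ and $n$ is an automorphism of $D_n$, so the associated permutation matrix commutes with $A$ and $\mathbb{R}^n=V_+\oplus V_-$ decomposes into its $\pm 1$ eigenspaces, both $A$-invariant. Here $V_-$ is one-dimensional, spanned by $f_{n-1}-f_n$, with $A(f_{n-1}-f_n)=0$; meanwhile $e\in V_+$, so the $A$-cyclic span of $e$ lies entirely in $V_+$ and $\rank W(D_n)$ equals the dimension of the $B$-cyclic span of $e$, where $B:=A|_{V_+}$. In the orthonormal basis $g_i=f_i$ for $i\le n-2$ and $g_{n-1}=(f_{n-1}+f_n)/\sqrt{2}$, the matrix $B$ becomes symmetric tridiagonal of size $n-1$ with zero diagonal and off-diagonals $(1,1,\ldots,1,\sqrt{2})$, while $e$ is represented by $\tilde e=(1,1,\ldots,1,\sqrt{2})^{\top}$.

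Next I would solve the eigenvalue problem for $B$. Setting $\lambda=2\cos\theta$ and trying the ansatz $v_i=\sin(i\theta)$ for $i\le n-2$, $v_{n-1}=\sin((n-1)\theta)/\sqrt{2}$ makes the left endpoint, the interior three-term recurrence, and the equation at index $n-2$ hold automatically; the remaining equation at index $n-1$ reduces, via $\cos\theta\sin((n-1)\theta)=\tfrac12[\sin(n\theta)+\sin((n-2)\theta)]$, to $\sin((n-2)\theta)=\sin(n\theta)$. This yields the $n-1$ admissible angles $\theta_k=(2k+1)\pi/(2n-2)$ for $k=0,1,\ldots,n-2$, producing $n-1$ distinct eigenvalues of the $(n-1)$-dimensional matrix $B$, so the vectors $v^{(k)}$ form a complete orthogonal eigenbasis of $V_+$. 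Because the spectrum of $B$ is simple, the $B$-cyclic span of $\tilde e$ equals $\mathrm{span}\{v^{(k)}\colon \langle\tilde e,v^{(k)}\rangle\neq 0\}$, so $\rank W(D_n)=\#\{k:\langle\tilde e,v^{(k)}\rangle\neq 0\}$.

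The decisive computation is then
$$\langle\tilde e,v^{(k)}\rangle=\sum_{i=1}^{n-1}\sin(i\theta_k)=\frac{\sin\bigl(\tfrac{(n-1)\theta_k}{2}\bigr)\sin\bigl(\tfrac{n\theta_k}{2}\bigr)}{\sin(\theta_k/2)}.$$
Since $(n-1)\theta_k/2=(2k+1)\pi/4$, the first numerator factor is always $\pm\sqrt{2}/2$ and in particular nonzero; hence the projection vanishes if and only if $\sin(n\theta_k/2)=0$, equivalently $4(n-1)\mid n(2k+1)$. Using $\gcd(n,n-1)=1$ and the fact that $2k+1$ is odd, this is equivalent to $4\mid n$ together with $(n-1)\mid(2k+1)$. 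When $4\mid n$ the odd integer $n-1$ divides the odd $2k+1\in\{1,3,\ldots,2n-3\}$ only at $2k+1=n-1$, i.e.\ at $k=(n-2)/2$, corresponding to $\theta_k=\pi/2$ and eigenvalue $0$. Hence exactly one of the $n-1$ eigenvectors of $B$ is orthogonal to $\tilde e$, giving $\rank W(D_n)=(n-1)-1=n-2$ as required.

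The main obstacle I anticipate is cleanly justifying the eigenvector ansatz and the rank-from-projections formula: one must verify that the listed $\theta_k$ produce \emph{distinct nonzero} eigenvectors (so a dimension count gives a complete basis), eliminate spurious roots of $\sin((n-2)\theta)=\sin(n\theta)$ outside the list, and handle small-$n$ degeneracies such as $n=4$ without the argument collapsing.
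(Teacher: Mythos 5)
Your argument is correct, and it is worth noting that the paper you are being compared against does not actually prove Proposition~\ref{rkW}: it imports the statement from \cite{wang2022LAA} without proof, and only uses it as an input to Lemma~\ref{clue}. Your proof is therefore a genuinely self-contained alternative. Methodologically it is close in spirit to the machinery of Section~3 of the paper but organized differently: the paper reduces to an $(n-2)\times(n-2)$ \emph{non-symmetric} matrix $B$ via an explicit integral intertwiner $C$ with $AC=CB$ (chosen so that the reduction preserves Smith-normal-form information, which is what Theorem~\ref{up} needs), whereas you fold along the automorphism swapping the two pendant vertices to get a \emph{symmetric} weighted path of size $n-1$, which is the natural reduction when only the rank over $\mathbb{R}$ is at stake. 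The eigen-angles you obtain, $\theta_k=(2k+1)\pi/(2n-2)$, are exactly the paper's $\alpha_k=\frac{2k-1}{2(n-1)}\pi$, and your computation of $\langle\tilde e,v^{(k)}\rangle=\sin\bigl(\tfrac{(n-1)\theta_k}{2}\bigr)\sin\bigl(\tfrac{n\theta_k}{2}\bigr)/\sin(\theta_k/2)$ is the same product-to-sum identity as Eq.~\eqref{exi2}; the paper uses it to show all $n-2$ surviving projections are nonzero and to evaluate their product as $\pm2^{\frac{n}{2}-1}$, while you use it to count that exactly one projection (at $\theta=\pi/2$, eigenvalue $0$) vanishes. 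The steps you flag as potential obstacles are all fine: the $n-1$ angles lie in $(0,\pi)$ so the eigenvalues $2\cos\theta_k$ are distinct and the eigenvectors nonzero ($v_1=\sin\theta_k\neq0$), a dimension count then rules out spurious roots, and the symmetry of your folded matrix makes the ``rank equals number of non-orthogonal eigenvectors'' step a standard Vandermonde argument; the case $n=4$ checks out directly. In short: correct, and a cleaner real-rank argument than the integral one the paper needs elsewhere, though it would not by itself yield the $2$-adic information required for Theorem~\ref{main}.
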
	
At the end of \cite{wang2022LAA}, the authors proposed the following
	\begin{problem}[\cite{wang2022LAA}]\normalfont{
			Is it true that  the Smith normal form of $W(D_n)$ is
			\begin{equation*} 
			\diag[\underbrace{1,1,\ldots,1}_{\frac{n}{2}-1},\underbrace{2,2,\ldots,2}_{\frac{n}{2}-1},0,0]
			\end{equation*}
			whenever $4\mid n$?}
	\end{problem}
The main result of this paper is the following theorem, which gives an affirmative answer to Problem 1.
\begin{theorem}\label{main}
	If $4\mid n$ then the Smith normal form of $W(D_n)$ is
		\begin{equation*}
	\diag[\underbrace{1,1,\ldots,1}_{\frac{n}{2}-1},\underbrace{2,2,\ldots,2}_{\frac{n}{2}-1},0,0].
	\end{equation*}
	\end{theorem}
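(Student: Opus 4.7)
The plan is to isolate the extra rank deficiency that arises when $4\mid n$, reduce the problem to an $(n-2)\times(n-2)$ integer block $B$, and pin down its Smith form via three explicit determinantal computations.

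First, I would exhibit two primitive integer vectors in $\ker W(D_n)^T$. Let $f_i$ denote the $i$-th standard basis vector. One is $u_1 = f_{n-1}-f_n$, arising from the automorphism swapping the two pendants at $v_{n-2}$, which forces rows $n-1$ and $n$ of $W$ to coincide. From $\phi_{D_n}(x) = 2xT_{n-1}(x/2)$ (expansion on the pendant $v_n$), the eigenvalue $0$ has multiplicity $2$ whenever $n$ is even; solving $Av=0$ subject to $v_{n-1}=v_n$ yields the primitive integer vector
\[
u_2 = (2,0,-2,0,2,0,\ldots,0,-1,-1)
\]
with $u_2(2k)=0$, $u_2(2k+1) = 2(-1)^k$ for $1\le 2k+1\le n-3$, and $u_2(n-1)=u_2(n) = (-1)^{(n-2)/2}$. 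A direct count gives $u_2\cdot e = 0$ exactly when $n\equiv 0\pmod 4$, placing $u_2$ in $\ker W(D_n)^T$. Extending $\{u_1,u_2\}$ to a $\mathbb{Z}$-basis of $\mathbb{Z}^n$ yields a unimodular row operation zeroing out two rows of $W$. Dually, since $\phi_{D_n}(x) = x^2 Q(x)$ with $Q\in\mathbb{Z}[x]$ of degree $n-2$, the relation $Q(A)e=0$ furnishes two integer column relations expressing $A^{n-2}e$ and $A^{n-1}e$ in terms of $e, Ae, \ldots, A^{n-3}e$, giving a unimodular column operation zeroing out the last two columns. These reductions transform the Smith form problem for $W(D_n)$ into that for an $(n-2)\times(n-2)$ matrix $B$ with $B_{i,j}=(A^{j-1}e)_{i+1}$, together with two appended zero invariant factors.

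To show SNF$(B)=\diag[1,\ldots,1,2,\ldots,2]$ with $n/2-1$ ones and $n/2-1$ twos, I would use the identification $d_k=\Delta_k/\Delta_{k-1}$ and reduce to three statements: (i) some $(n/2-1)\times(n/2-1)$ minor of $B$ is $\pm 1$; (ii) $\Delta_{n/2}(B)=\pm 2$; (iii) $\det B=\pm 2^{n/2-1}$. Together with $d_i\mid d_{i+1}$ these pin down every $d_i$, and Wang's bound $\rank_2 W\le \lceil n/2\rceil$ together with the Choi et al.\ bound on invariants $\equiv 2\pmod 4$ serve as consistency checks.

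The principal obstacle is (iii): extracting the extra factor of $2$ in $\det B$ that distinguishes $4\mid n$ from the $4\nmid n$ case of Theorem~\ref{snfd}, without accidentally producing a factor of $4$. Writing $Q(x)=\tilde{Q}(x^2)$ with $\deg\tilde{Q}=n/2-1$ (which is possible since $\phi_{D_n}/x=2T_{n-1}(x/2)$ is an odd polynomial for $n$ even), the identity $\tilde{Q}(A^2)e=0$ together with the Frobenius congruence $\tilde{Q}(A^2)\equiv \tilde{Q}(A)^2\pmod 2$ and the Chebyshev product formula $T_{2m-1}(x)=2T_m(x)T_{m-1}(x)-x$ specialized to $m=n/2$ even should yield the key integrality statement $\tilde{Q}(A)e\equiv 0\pmod 2$; this injects an extra factor of $2$ into $\det B$ and mirrors the mechanism used in \cite{wang2022LAA} for the $4\nmid n$ case.
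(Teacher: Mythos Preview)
Your reduction has the right shape, but there are two real gaps and one minor slip.

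First the slip: the pair $\{u_1,u_2\}$ does \emph{not} extend to a $\mathbb{Z}$-basis of $\mathbb{Z}^n$. Every $2\times 2$ minor of $\begin{pmatrix}u_1^\T\\u_2^\T\end{pmatrix}$ is even, since $u_1$ is supported on $\{n-1,n\}$ while $u_2(j)\in\{0,\pm 2\}$ for $j\le n-2$ and $u_2(n-1)=u_2(n)$. You can fix this by replacing $u_2$ with $v=(u_1+u_2)/2$, which is integral when $4\mid n$ and does give a primitive pair. Similarly, the column reduction needs $Q(A)e=0$ over $\mathbb{Z}$ (not just $A^2Q(A)e=0$); this is true, but it requires an argument (e.g.\ $Q(A)e\in\Ker A$ is symmetric in coordinates $n-1,n$, forcing it into $\Span\{u_2\}$, and then $u_2^\T Q(A)e=Q(0)\,u_2^\T e=0$).

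The first genuine gap is in your route to $\tilde Q(A)e\equiv 0\pmod 2$. From $\tilde Q(A^2)e=0$ and Frobenius you only get $\tilde Q(A)^2e\equiv 0\pmod 2$; this does \emph{not} imply $\tilde Q(A)e\equiv 0\pmod 2$ without further input. The paper closes this by first using $\varphi(A)^2\equiv 0$ as a matrix identity (Cayley--Hamilton plus the square factorization of $\phi$ over $\mathbb{F}_2$) together with the lemma ``$M$ symmetric, $M^2\equiv 0\Rightarrow Me\equiv 0$'' to get $\varphi(A)e\equiv 0$, and then invokes a separate structural fact---$\Im W\cap\Ker A=\{0\}$ over $\mathbb{F}_2$ when $4\mid n$---to strip the extra factor of $x$ from $\varphi$ and obtain $\varphi_1(A)e\equiv 0$ (your $\tilde Q$ is exactly their $\varphi_1$). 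Your invocation of the Chebyshev product formula does not supply this missing step.

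The second and more serious gap is that even granting $\tilde Q(A)e\equiv 0\pmod 2$, you have only established $\rank_2 B\le n/2-1$, hence $2^{\,n/2-1}\mid\det B$. Your claim (iii) is the \emph{exact} equality $\det B=\pm 2^{\,n/2-1}$, and it is precisely the reverse divisibility---ruling out any odd prime factor or any extra factor of $2$---that carries the whole proof. You give no mechanism for this, nor for (i) and (ii), which are likewise upper-bound statements on minors. In the paper this is the content of the entire Section~3: one exhibits a specific $(n-2)\times(n-2)$ minor $\tilde W(D_n)$, identifies it with the walk matrix of an auxiliary (non-symmetric) matrix via an intertwining $AC=CB$, constructs explicit eigenvectors of $B^\T$ in closed trigonometric form, and evaluates $\det\tilde W(D_n)=\pm 2^{\,n/2-1}$ through a product of sines. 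That computation, not the $\rank_2$ bound, is the principal obstacle.
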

The basic clue to prove this theorem can be described by the following simple lemma.
\begin{lemma}\label{clue}
	Let $n\equiv 0\pmod{4}$. If the following two conditions hold
	simultaneously:\\
	\textup{(\rmnum{1})} $\rank_2 (W(D_n))\le \frac{n}{2}-1$, and\\
	\textup{(\rmnum{2})} $\Delta_{n-2}(W(D_n))$ divides $2^{\frac{n}{2}-1}$,\\
	then the Smith normal form of $W(D_n)$ has the desired form as described in Theorem \ref{main}.
\end{lemma}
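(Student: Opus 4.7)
The plan is to extract what each hypothesis says about the invariant factors and then combine these with Proposition~\ref{rkW} to pin them down exactly. Write the Smith normal form of $W(D_n)$ as $\diag[d_1,d_2,\ldots,d_n]$ with $d_i\mid d_{i+1}$, so that $\Delta_{n-2}=d_1 d_2\cdots d_{n-2}$.

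First I would apply Proposition~\ref{rkW} to get $d_{n-1}=d_n=0$ and $d_1,\ldots,d_{n-2}$ positive. Then condition (ii) says that $d_1 d_2\cdots d_{n-2}$ divides $2^{\frac{n}{2}-1}$, which immediately forces each $d_i$ (for $i\le n-2$) to be a power of $2$, since every $d_i$ divides $\Delta_{n-2}$. Next I would use condition (i): since $\rank_2 W(D_n)\le \frac{n}{2}-1$, at most $\frac{n}{2}-1$ of the $d_i$'s are coprime to $2$; among powers of $2$ this means at most $\frac{n}{2}-1$ of them equal $1$, and by the divisibility chain these $1$'s must occupy an initial segment, say $d_1=\cdots=d_k=1$ with $k\le \frac{n}{2}-1$, while $d_{k+1},\ldots,d_{n-2}\ge 2$.

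A simple counting step then closes the argument: since at least $n-2-k\ge \frac{n}{2}-1$ of the factors are each at least $2$, one obtains $\Delta_{n-2}\ge 2^{n-2-k}\ge 2^{\frac{n}{2}-1}$, but (ii) forces $\Delta_{n-2}\mid 2^{\frac{n}{2}-1}$, so every inequality is an equality. Hence $k=\frac{n}{2}-1$ and $d_{k+1}=\cdots=d_{n-2}=2$, giving the claimed form. The lemma itself is thus essentially bookkeeping; the genuine obstacle, which I expect to dominate the rest of the paper, is verifying conditions (i) and (ii) for $W(D_n)$ in the case $n\equiv 0\pmod{4}$, presumably through explicit Chebyshev-polynomial manipulations in the spirit of \cite{wang2022LAA}.
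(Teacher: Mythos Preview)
Your proof is correct and follows essentially the same approach as the paper's: both use Proposition~\ref{rkW} to reduce to the first $n-2$ invariant factors, observe from (ii) that these are powers of $2$, and then squeeze the inequality $2^{\frac{n}{2}-1}\ge\Delta_{n-2}\ge 2^{n-2-k}\ge 2^{\frac{n}{2}-1}$ (where $k=\rank_2 W(D_n)$) to force all equalities. Your closing remark that the real work lies in verifying (i) and (ii) is also exactly the structure of the paper.
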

\begin{proof}
	Let $r=\rank_2(W(D_n))$. By Proposition \ref{rkW} and noting the second condition of this lemma, we obtain that the Smith normal form of $W(D_n)$ has the following pattern:
		\begin{equation*}
	\diag[\underbrace{1,1,\ldots,1}_{r},\underbrace{2^{k_1},2^{k_2},\ldots,2^{k_{n-r-2}}}_{n-r-2},0,0],
	\end{equation*}
	where $k_{n-r-2}\ge k_{n-r-3}\ge\cdots\ge k_1\ge 1$ and $2^{k_1+k_2+\cdots+k_{n-r-2}}=\Delta_{n-2}(W(D_n))$. As $r\le \frac{n}{2}-1$, we see that
	$k_1+k_2+\cdots+k_{n-r-2}\ge n-r-2\ge\frac{n}{2}-1$. But since $\Delta_{n-2}(W(D_n))$ divides $2^{\frac{n}{2}-1}$, we also have that $k_1+k_2+\cdots+k_{n-r-2}\le \frac{n}{2}-1$. Thus equalities must hold simultaneously in all these inequalities.  This proves that $W(D_n)$ has the desired Smith normal form.
\end{proof}
	\section{An upper bound on $\rank_2W(D_n)$}
	The main aim of this section is to show that the first condition stated in Lemma \ref{clue} always holds for $n\equiv 0\pmod{4}$.
	\begin{theorem}\label{rk2}
		$\rank_2 W(D_n)\le \frac{n}{2}-1$ for $n\equiv 0\pmod{4}$.
	\end{theorem}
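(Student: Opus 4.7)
The plan is to exhibit a polynomial $q(x)\in\mathbb{F}_{2}[x]$ of degree $n/2-1$ with $q(A)e\equiv 0\pmod{2}$, where $A=A(D_{n})$. Since $\rank_{2}W(D_{n})$ equals the degree of the minimal polynomial of $A$ acting on $e$ over $\mathbb{F}_{2}$, this immediately yields $\rank_{2}W(D_{n})\le n/2-1$. To identify $q$, insert the recurrence $\phi(P_{n-1})=x\phi(P_{n-2})-\phi(P_{n-3})$ into $\phi(D_{n};x)=x[\phi(P_{n-1};x)-\phi(P_{n-3};x)]$ to obtain $\phi(D_{n};x)/x^{2}\equiv\phi(P_{n-2};x)\pmod{2}$. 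A straightforward induction on the path recurrence shows that every monomial of $\phi(P_{k};x)$ has degree congruent to $k$ modulo $2$, so when $4\mid n$ we have $\phi(P_{n-2};x)\in\mathbb{F}_{2}[x^{2}]$; by the Frobenius identity $f(x^{2})=f(x)^{2}$, this is a perfect square in $\mathbb{F}_{2}[x]$, and I define $q$ to be its square root, so $\deg q=n/2-1$ and $q(x)^{2}\equiv\phi(P_{n-2};x)\pmod{2}$.

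Next I recognize $q$ as a characteristic polynomial of an explicit restriction. Because $n-2$ is even, the reflection involution of $P_{n-2}$ is free and decomposes $\phi(P_{n-2};x)=f_{+}(x)f_{-}(x)$, where $f_{\pm}(x)=\det(xI-A(P_{(n-2)/2})\mp E)$ and $E$ has a single $1$ at the bottom-right corner. The matrices $A(P_{(n-2)/2})\pm E$ agree modulo $2$, so $f_{+}\equiv f_{-}\pmod{2}$; combined with $f_{+}f_{-}\equiv q^{2}\pmod{2}$ and the injectivity of squaring on monic polynomials in $\mathbb{F}_{2}[x]$, this forces $f_{+}\equiv q\pmod{2}$. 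Since the all-ones vector $\mathbf{1}$ lies in the symmetric eigenspace $V_{+}$, Cayley--Hamilton applied to the restriction $A(P_{n-2})|_{V_{+}}$ gives $q(A(P_{n-2}))\mathbf{1}\equiv 0\pmod{2}$.

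Finally I transport this identity back to $D_{n}$. The swap of vertices $1$ and $2$ is an automorphism of $D_{n}$ fixing $e$, so the cyclic subspace $\langle e,Ae,A^{2}e,\ldots\rangle$ lies in $V=\{x:x_{1}=x_{2}\}$. In coordinates $(y_{0},y_{3},\ldots,y_{n})$ on $V$ with $y_{0}=x_{1}=x_{2}$ and $y_{i}=x_{i}$ for $i\ge 3$, the term $2y_{0}$ in $(Ay)_{3}=2y_{0}+y_{4}$ vanishes modulo $2$, so $\tilde A:=A|_{V}$ reduces modulo $2$ to a block upper triangular matrix with blocks $0$ (on $y_{0}$) and $A(P_{n-2})$ (on $y_{3},\ldots,y_{n}$), coupled only through the first row $r^{T}=(1,0,\ldots,0)$. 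A short induction yields $\tilde A^{k}\tilde e=\bigl((A(P_{n-2})^{k-1}\mathbf{1})_{1},\,A(P_{n-2})^{k}\mathbf{1}\bigr)$ for $k\ge 1$, so with $c_{0}=q(0)$ and $\tilde q(y)=(q(y)-c_{0})/y$,
\[
q(\tilde A)\tilde e\equiv \begin{pmatrix} c_{0}+(\tilde q(A(P_{n-2}))\mathbf{1})_{1} \\ q(A(P_{n-2}))\mathbf{1} \end{pmatrix}\pmod{2}.
\]
The lower block vanishes by the previous step. Rewriting this vanishing as $A(P_{n-2})\,\tilde q(A(P_{n-2}))\mathbf{1}\equiv c_{0}\mathbf{1}\pmod{2}$ and unwinding along the path produces a $4$-periodic pattern $(v_{1},c_{0},c_{0}+v_{1},0,v_{1},\ldots)$ for the entries of $v:=\tilde q(A(P_{n-2}))\mathbf{1}$; the boundary condition $v_{n-3}\equiv c_{0}\pmod{2}$ together with the congruence $n-3\equiv 1\pmod{4}$ (which is where $4\mid n$ becomes decisive, and which fails when $n\equiv 2\pmod{4}$) then force $v_{1}=c_{0}$. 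Hence the upper block also vanishes, giving $q(\tilde A)\tilde e\equiv 0\pmod{2}$ and $\rank_{2}W(D_{n})\le n/2-1$. I expect the main obstacle to be keeping the bookkeeping of the two separate foldings (reflection of $P_{n-2}$ on one hand, identification of vertices $1$ and $2$ of $D_{n}$ on the other) distinct, and tracking where each invocation of $4\mid n$ is needed; the rest of the argument is largely mechanical given the parity properties of $\phi(P_{k})$ and standard Cayley--Hamilton.
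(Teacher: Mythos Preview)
Your argument is correct and reaches the same conclusion as the paper, namely that the polynomial $\varphi_1(x)$ of degree $n/2-1$ (your $q$) annihilates $e$ over $\mathbb{F}_2$; indeed your $q$ coincides mod~$2$ with the paper's $\varphi_1$, since both satisfy $x^2q(x)^2\equiv\phi(D_n;x)\pmod 2$. The route, however, is genuinely different. The paper first gets $\varphi(A)e\equiv 0$ from the general fact that a symmetric $M$ with $M^2\equiv 0\pmod 2$ satisfies $Me\equiv 0\pmod 2$, and then upgrades to $\varphi_1(A)e\equiv 0$ via a structural lemma (Lemma~\ref{disj}): using the primary decomposition $\mathbb{F}_2^n=\Ker(A^2)\oplus\Ker(\psi(A))$ and an orthogonality argument, one shows $\Im(W)\cap\Ker(A)=\{0\}$, so $A\varphi_1(A)e\equiv 0$ forces $\varphi_1(A)e\equiv 0$. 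The hypothesis $4\mid n$ enters there through $\beta^\T e=n/2\equiv 0$. You instead recognise $q$ mod~$2$ as the characteristic polynomial of the symmetric half of $P_{n-2}$, obtain $q(A(P_{n-2}))\mathbf 1\equiv 0$ by Cayley--Hamilton on that half, and then lift to $D_n$ by an explicit block computation, with $4\mid n$ entering as the congruence $n-3\equiv 1\pmod 4$ in a $4$-periodic recursion. Your approach is more elementary and self-contained (no primary decomposition, no appeal to the ``$M^2\equiv 0\Rightarrow Me\equiv 0$'' lemma) and makes the failure for $n\equiv 2\pmod 4$ transparent, since then $c_0=q(0)\equiv 1$ and the boundary forces $v_1=0$ rather than $v_1=c_0$; the paper's approach is cleaner conceptually and isolates the obstruction in a single inner product $\beta^\T e$.
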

\begin{lemma}\label{droot}
Let $\phi_n(x)=\det (xI-A(D_n))$ be the characteristic polynomial of $D_n$. If  $n$ is even then $x=0$ is a double root of $\phi_n(x)$ over $\mathbb{F}_2$.
\end{lemma}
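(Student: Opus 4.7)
My plan is to obtain a closed form for $\phi_n(x)$ as a polynomial in the characteristic polynomials of paths, and then factor out $x^2$ modulo $2$.

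First I would invoke the standard pendant-vertex reduction: if $v$ is a pendant vertex of a graph $G$ with unique neighbor $u$, then $\phi_G(x)=x\,\phi_{G-v}(x)-\phi_{G-v-u}(x)$, obtained by Laplace expansion of $\det(xI-A)$ along the row/column indexed by $v$. Applying this to $D_n$ with $v$ one of the two pendant vertices at the fork (say $v_n$, whose only neighbor $u=v_{n-2}$ has degree~$3$), the graph $D_n-v$ is the path $P_{n-1}$ while $D_n-v-u$ is the disjoint union $P_{n-3}\sqcup P_1$. Writing $p_k(x):=\phi_{P_k}(x)$, this yields
\[
\phi_n(x)=x\,p_{n-1}(x)-x\,p_{n-3}(x)=x\bigl(p_{n-1}(x)-p_{n-3}(x)\bigr).
\]

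Next I would use the Chebyshev-type three-term recurrence $p_k(x)=x\,p_{k-1}(x)-p_{k-2}(x)$ to rewrite $p_{n-1}(x)-p_{n-3}(x)=x\,p_{n-2}(x)-2\,p_{n-3}(x)$. Reducing modulo $2$ kills the second summand, so
\[
\phi_n(x)\equiv x^{2}\,p_{n-2}(x)\pmod{2},
\]
which establishes that $x=0$ is a double root of $\phi_n$ over $\mathbb{F}_2$. As a bonus, the recurrence also gives $p_{2k}(0)=-p_{2k-2}(0)=\cdots=(-1)^k$, so when $n$ is even $p_{n-2}(0)$ is odd and the multiplicity of the root is in fact exactly~$2$, a fact likely to be needed in the proof of Theorem~\ref{rk2}.

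I do not anticipate any serious obstacle: the entire argument rests on two well-known recurrences. The only things requiring care are the correct identification of $D_n-v$ and $D_n-v-u$ from the labeling of $D_n$ shown in the figure, and the one-line parity check on path polynomials at $x=0$.
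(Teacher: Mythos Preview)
Your argument is correct and rests on the same two ingredients as the paper---the pendant-vertex expansion $\phi_n(x)=x\,p_{n-1}(x)-x\,p_{n-3}(x)$ and the three-term recurrence for path polynomials---but you package them more efficiently. The paper proceeds coefficient by coefficient: it sets $a_k=p_k(0)$ and $b_k=p_k'(0)$, derives the mod~$2$ recurrences $a_k\equiv a_{k-2}$ and $b_k\equiv a_{k-1}+b_{k-2}$, and then checks that the constant and linear coefficients of $\phi_n$ vanish while the quadratic coefficient $c_n\equiv a_{n-2}\equiv 1$ for $n$ even. You instead apply the recurrence once more at the polynomial level to obtain $p_{n-1}-p_{n-3}=x\,p_{n-2}-2\,p_{n-3}$, giving the clean factorization $\phi_n\equiv x^{2}\,p_{n-2}\pmod 2$ in one stroke. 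This is shorter and yields strictly more information (the full mod-$2$ factorization rather than the first three coefficients).

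One small point of presentation: the factorization $\phi_n\equiv x^2 p_{n-2}$ by itself only gives multiplicity \emph{at least} two; the ``exactly two'' part of the lemma really does require your observation that $p_{n-2}(0)\equiv 1\pmod 2$ for $n$ even, so that step is not a bonus but part of the proof proper (and, as you note, it is precisely what is used later in Lemma~\ref{disj} via the primary decomposition).
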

\begin{proof}	
	For any positive integer $n$, let $\rho_n(x)$ be the characteristic polynomial of the $n$-vertex path graph $P_n$. Expanding $\det (xI-A(D_n))$ by the first row, one easily finds that
	\begin{equation}\label{dnpn}
\phi_n(x)=x\rho_{n-1}(x)-x\rho_{n-3}(x).
	\end{equation}
	It is well known that $\rho_n(x)=U_n(x/2)$, where $U_n(x)$ is the $n$-th Chebyshev polynomial of the second kind. Let $a_n$ and $b_n$ be the constant term and the coefficient of $x$ in $\rho_n(x)$, respectively.  By the recurrence relation $\rho_n(x)=x\rho_{n-1}(x)-\rho_{n-2}(x)$, we have
	\begin{equation}
	\begin{cases}
	a_n=-a_{n-2}\\
	b_n=a_{n-1}-b_{n-2}
	\end{cases}
	\end{equation}
	and hence 	\begin{equation}\label{abmod2}
	\begin{cases}
	a_n\equiv a_{n-2}\pmod {2}\\
	b_n\equiv a_{n-1}+b_{n-2}\pmod{2}.
	\end{cases}
	\end{equation}

By Eq.~\eqref{dnpn} and the first congruence in  Eq.~\eqref{abmod2}, one finds that  both the constant term and the coefficient of  $x$ in $\phi_n(x)$ are zero, over $\mathbb{F}_2$. Let $c_n$ be the coefficient of $x^2$ in $\phi_n(x)$. Then $c_n=b_{n-1}-b_{n-3}$. Using Eq.~\eqref{abmod2}, we have that $c_n\equiv a_{n-2}\pmod{2}$ and hence the sequence $\{c_n\}_{n\ge 4}$ has period 2, over $\mathbb{F}_2$. Note that $a_2=-1$. This means $c_n\equiv 1\pmod{2}$ whenever $n$ is even and hence the lemma follows.
\end{proof}
For an $n\times n$ matrix $M$ over a field $\mathbb{F}$, we identify $M$ with the linear transformation $\sigma:\mathbb{F}^n\mapsto \mathbb{F}^n$ defined by $\sigma(x)=Mx$ for all $x\in \mathbb{F}^n$. Let $\Im (M)$ and $\Ker (M)$ denote the image and kernel of  $M$, respectively. In symbols, $\Im (M)=\{Mx\colon\,x\in \mathbb{F}^n\}$ and $\Ker (M)=\{x\in\mathbb{F}^n\colon\,Mx=0\}$.
\begin{lemma}\label{disj}
If $n\equiv 0\pmod{4}$ then	$\Im(W(D_n))\cap \Ker (A(D_n))=\{0\}$ over $\mathbb{F}_2$.
\end{lemma}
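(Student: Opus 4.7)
My plan is to derive Lemma \ref{disj} from two claims over $\mathbb{F}_2$: (a) $e \in \Im(A)$, and (b) $\Ker(A) = \Ker(A^2)$. Once both are in hand, the conclusion is immediate. By (a), every column $A^k e$ of $W(D_n)$ lies in $\Im(A)$, so $\Im(W) \subseteq \Im(A)$. By (b), any $v = Aw \in \Ker(A)$ satisfies $A^2 w = 0$, hence $w \in \Ker(A^2) = \Ker(A)$ and $v = Aw = 0$, giving $\Im(A) \cap \Ker(A) = \{0\}$. Combining, $\Im(W) \cap \Ker(A) \subseteq \Im(A) \cap \Ker(A) = \{0\}$.

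For (b), I would combine Lemma \ref{droot} with a direct dimension count. Lemma \ref{droot} states that the multiplicity of $x = 0$ in $\phi_n(x)$ is exactly $2$ over $\mathbb{F}_2$, so the generalized null space $\bigcup_k \Ker(A^k)$ has dimension $2$. A direct analysis of $A x = 0$ on $D_n$ is then routine: the two pendant rows force the coordinate at the branching vertex to vanish; the path-type rows then force all coordinates at odd distance from the branching vertex to vanish and make all coordinates at positive even distance equal to a common value; and the branching-vertex row contributes one linear relation among the two pendant coordinates and this common value. Counting, there are two free parameters, so $\dim \Ker A = 2$ matches the dimension of the generalized null space, forcing $\Ker A = \Ker A^2$.

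The main work, where the hypothesis $4 \mid n$ is essential, is (a): solving $Ax = e$ over $\mathbb{F}_2$. The pendant rows force the branching-vertex coordinate to equal $1$, and the row at the degree-one endpoint of the long path forces its neighbor's coordinate to equal $1$. Between them, the path-type rows deliver the two-step recurrence $x_{k+1} = 1 + x_{k-1}$, which propagates from the branching vertex as the alternating pattern $1, 0, 1, 0, \ldots$; it produces $1$ at the endpoint's neighbor precisely when $(n-2)/2$ is odd, i.e., precisely when $4 \mid n$. The remaining even-distance coordinates can then be chosen, using the two pendant coordinates as free parameters, so as to satisfy the branching-vertex equation, yielding an explicit preimage of $e$ under $A$. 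The main obstacle is exactly this parity check; when $n \equiv 2 \pmod 4$, it fails, $Ax = e$ becomes inconsistent over $\mathbb{F}_2$, and the lemma itself fails --- so the argument is tailored precisely to the congruence $4 \mid n$.
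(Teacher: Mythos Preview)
Your approach is correct and close in spirit to the paper's, though the key step is argued differently. Both proofs establish $\Ker(A)=\Ker(A^2)$ (your claim (b), the paper's Claim~1), but where you show $e\in\Im(A)$ by directly solving $Ax=e$ over $\mathbb{F}_2$, the paper instead invokes the primary decomposition $\mathbb{F}_2^n=\Ker(A^2)\oplus\Ker(\psi(A))$ and proves $e\in\Ker(\psi(A))$ via an orthogonality argument: since $A$ is symmetric the two summands are orthogonal, so $\Ker(\psi(A))=\{z:\alpha^\T z=\beta^\T z=0\}$ where $\alpha,\beta$ span $\Ker(A)$, and the condition $\beta^\T e=0$ is exactly where $4\mid n$ enters. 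The two target statements are in fact equivalent here (once $\Ker(A)=\Ker(A^2)$ one has $\Im(A)=\Ker(\psi(A))$), so the difference is methodological: your direct linear-system solve makes the role of $4\mid n$ visible as the parity obstruction in the recurrence $x_{k+1}=1+x_{k-1}$, while the paper's argument is a cleaner structural one exploiting symmetry and avoids any coordinate chase. One small slip in your sketch of $\Ker(A)$: you have ``odd distance'' and ``even distance'' interchanged---starting from $x_3=0$, the path recurrence forces the vertices at \emph{even} distance from the branch vertex (indices $5,7,\ldots$) to vanish, while those at odd distance (indices $4,6,\ldots$) share a common value; this does not affect your conclusion $\dim\Ker(A)=2$.
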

\begin{proof}
	Let $n\equiv 0\pmod{4}$ and $\phi_n(x)$ be the characteristic polynomial of $D_n$. In the following argument we always assume the field $\mathbb{F}=\mathbb{F}_2$. By Lemma \ref{droot}, we can write
	\begin{equation}
	\phi_n(x)=x^2\psi(x),
	\end{equation}
with $\psi(0)\neq 0$. Thus, $x^2$ and $\psi(x)$ are coprime polynomials. Let  $A=A(D_n)$. By the primary decomposition theorem, we have
\begin{equation}\label{pdt}
\mathbb{F}_2^n=\Ker (A^2)\oplus \Ker( \psi(A)).
\end{equation}
Moreover, $\dim \Ker (A^2)=2$,  $\dim \Ker( \psi(A))=\deg \psi(x)=n-2$ and both subspaces are $A$-invariant.

\noindent\emph{Claim} 1. $\Ker(A^2)=\Ker(A)=\span\{\alpha,\beta\}$,
where $\alpha=(1,1,0,\ldots,0)^\T$ and $\beta=(0,1,\ldots,0,1)^\T.$

Direct calculation shows that $A\alpha=0$ and $A\beta=0$. Thus,  $\Ker(A^2)\supset\Ker(A)\supset\span\{\alpha,\beta\}$. Note that $\alpha$ and $\beta$ are linearly independent. Thus $\span\{\alpha,\beta\}$ is two-dimensional. Since $\Ker (A^2)$ is also two-dimensional, all the three spaces must be equal, as claimed.

\noindent\emph{Claim} 2. $\Ker (A^2)\perp \Ker( \psi(A))$, that is, $\xi^\T\eta=0$ for any $\xi\in \Ker (A^2)$ and $\eta\in \Ker( \psi(A))$.

As $x^2$ and $\psi(x)$ are coprime, there exist two polynomials $\mu(x)$ and $\nu(x)$ (in $\mathbb{F}_2[x]$) such that
$\mu(x)x^2+\nu(x)\psi(x)=1$.Noting that $A^\T=A$ and $A^2\xi=\psi(A)\eta=0$, we obtain
\begin{eqnarray*}
	\xi^\T\eta&=&	\xi^\T(\mu(A)A^2+\nu(A)\psi(A))\eta\\
	&=&(\mu(A)A^2\xi)^\T\eta+	\xi^\T\nu(A)\psi(A)\eta\\
	&=&0.
\end{eqnarray*}
\noindent\emph{Claim} 3. $e\in \Ker( \psi(A))$.

Let $U=\{z\in\mathbb{F}_2^n\colon\,\alpha^\T z=\beta^\T z=0\}$. As $\alpha^\T e=0$,  and $\beta^\T e=0$ (noting $4\mid n$), we see that $e\in U$. Clearly, $\dim U=n-2$ as $\alpha$ and $\beta$ are linearly independent. By Claims 1 and 2, we have $\span\{\alpha,\beta\}\perp\Ker (\psi(A))$ and hence $\Ker (\psi(A))\subset U$. We conclude that $\Ker (\psi(A))= U$ since they have the same dimension. Claim 3 follows.

As $\Ker (\psi(A))$ is $A$-invariant, Claim 3 implies $A^k e\in \Ker (\psi(A))$ for any nonnegative integer $k$. This means that all columns of $W(D_n)$ belong to  $\Ker (\psi(A))$, i.e., $\Im(W(D_n)) \subset \Ker (\psi(A))$. Lemma \ref{disj} follows by Eq.~\eqref{pdt}.
\end{proof}
We also need two simple facts noticed in \cite{wang2017JCTB}.
\begin{lemma}[\cite{wang2017JCTB}]\label{odd0}	 Let $G$ be an $n$-vertex graph and $\phi(x)=x^n+c_1x^{n-1}+\cdots+c_{n-1}x+c_{n}$ be the characteristic polynomial of $G$. Then $c_i\equiv 0\pmod{2}$ for any odd $i$ in $\{1,2,\ldots,n\}$.
\end{lemma}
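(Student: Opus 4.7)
The plan is to prove Lemma \ref{odd0} via the classical Coefficient Theorem (also known as Sachs' formula), which expresses each coefficient of $\phi(x)$ as a signed weighted count of certain spanning subgraphs. Recall that an \emph{elementary subgraph} $H$ of $G$ is a subgraph in which every connected component is either a single edge $K_2$ or a cycle $C_k$ for some $k\geq 3$. The Coefficient Theorem states that
\begin{equation*}
c_i=\sum_{H\in\mathcal{H}_i}(-1)^{p(H)}\,2^{c(H)},
\end{equation*}
where $\mathcal{H}_i$ is the set of all elementary subgraphs of $G$ on exactly $i$ vertices, $p(H)$ is the number of connected components of $H$, and $c(H)$ is the number of cycle components of $H$.

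With this formula in hand, the argument reduces to a parity count. First, I would observe that any elementary subgraph $H$ on $i$ vertices decomposes into, say, $m$ edge components contributing $2m$ vertices, plus cycle components contributing $l_1+\cdots+l_k$ vertices, so that $i=2m+l_1+\cdots+l_k$. If $H$ contains no cycle component (i.e.\ $c(H)=0$), then $i=2m$ is necessarily even. Consequently, when $i$ is odd, every $H\in\mathcal{H}_i$ must satisfy $c(H)\geq 1$, which forces $2^{c(H)}$ to be even.

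The conclusion then follows immediately: each term $(-1)^{p(H)}\,2^{c(H)}$ in the sum defining $c_i$ is divisible by $2$, hence $c_i\equiv 0\pmod{2}$ for every odd $i\in\{1,2,\ldots,n\}$.

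There is no real obstacle in this proof beyond citing or reproving the Coefficient Theorem; the parity observation on the number of vertices contributed by edges versus cycles is what drives the entire argument. If one prefers to avoid invoking Sachs' formula directly, an alternative route would be to use the identity $\phi(x)=\sum_{k=0}^{n}(-1)^{k}\tr(\Lambda^{k}A)\,x^{n-k}$ (where $\Lambda^{k}A$ is the $k$-th compound), together with the observation that each principal $k\times k$ submatrix of $A$ is the adjacency matrix of an induced subgraph, whose determinant mod $2$ can be analyzed by the same elementary-subgraph classification; but Sachs' formula gives the cleanest and shortest path.
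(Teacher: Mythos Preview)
Your argument via Sachs' coefficient formula is correct: when $i$ is odd, every elementary subgraph on $i$ vertices must contain at least one cycle component, so each summand $(-1)^{p(H)}2^{c(H)}$ is even and hence $c_i\equiv 0\pmod 2$.

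Note, however, that the present paper does not give its own proof of this lemma; it simply quotes the result from \cite{wang2017JCTB}. The argument there is phrased differently but is essentially the linear-algebraic cousin of yours: one uses the expansion $c_i=(-1)^i\sum_{|S|=i}\det A_S$ over all principal $i\times i$ submatrices, observes that each $A_S$ is symmetric with zero diagonal (hence alternating) over $\mathbb{F}_2$, and invokes the fact that an alternating matrix of odd order has vanishing determinant (its Pfaffian is zero). Your Sachs-formula proof and this principal-minor proof are two faces of the same combinatorial identity---indeed, you allude to the latter route at the end of your write-up. Either is perfectly acceptable here; the Sachs version has the mild advantage of being entirely self-contained, while the alternating-matrix version explains more transparently why, over $\mathbb{F}_2$, the full polynomial $\phi(x)$ becomes a polynomial in $x^2$ and hence a perfect square, which is exactly how the paper exploits this lemma in the proof of Theorem~\ref{rk2}.
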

\begin{lemma}[\cite{wang2017JCTB}]\label{me}
	Let $M$ be an integral symmetric matrix. If $M^2\equiv 0\pmod{2}$ then $Me\equiv 0\pmod{2}$.
\end{lemma}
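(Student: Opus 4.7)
The plan is to exploit the basic congruence $x^2 \equiv x \pmod{2}$, valid for every integer $x$, together with the symmetry of $M$, in order to read off the vector $Me$ modulo $2$ from the diagonal of $M^2$.

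First I would compute the $(i,i)$-entry of $M^2$ in two ways. By definition $(M^2)_{ii} = \sum_j M_{ij} M_{ji}$, and since $M = M^\T$ this equals $\sum_j M_{ij}^2$. Reducing modulo $2$ and applying $x^2 \equiv x \pmod{2}$ termwise gives $(M^2)_{ii} \equiv \sum_j M_{ij} = (Me)_i \pmod{2}$. The hypothesis $M^2 \equiv 0 \pmod{2}$ says in particular that every diagonal entry of $M^2$ is even, so $(Me)_i \equiv 0 \pmod{2}$ for every $i$, which is exactly the conclusion.

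There is essentially no obstacle once one notices the bridge between the diagonal of $M^2$, which is a sum of products $M_{ij}M_{ji}$, and the row sum $\sum_j M_{ij}$ via the Frobenius identity in $\mathbb{F}_2$. The symmetry hypothesis is precisely what converts the bilinear sum into a sum of squares; without it one would only obtain the analogous statement for $M M^\T$ rather than for $M$ itself, and the identification with $Me$ would fail.
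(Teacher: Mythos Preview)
Your argument is correct: the diagonal entry $(M^2)_{ii}=\sum_j M_{ij}M_{ji}=\sum_j M_{ij}^2\equiv\sum_j M_{ij}=(Me)_i\pmod 2$, and the hypothesis forces each of these to vanish. The paper itself does not supply a proof of this lemma but merely quotes it from \cite{wang2017JCTB}; your proof is in fact the standard one-line argument used there, so there is nothing further to compare.
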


\noindent\emph{Proof of Theorem \ref{rk2}}
Let $\phi(x)=x^n+c_1x^{n-1}+\cdots+c_{n-1}x+c_{n}$ be the characteristic polynomial of $D_n$.
 By Lemma \ref{odd0}, we find that $\phi(x)\equiv x^n+c_2x^{n-2}+\cdots+c_{n-2}x^2+c_n\pmod{2}$. Moreover, by Lemma \ref{droot}, we have $c_n\equiv 0\pmod{2}$. Therefore, we obtain
 \begin{eqnarray*}
 \phi(x)&\equiv&x^n+c_2x^{n-2}+\cdots+c_{n-2}x^2\\
 	&\equiv&(x^\frac{n}{2}+c_{2}x^{\frac{n}{2}-1}+\cdots+c_{n-2}x)^2\pmod{2}.
 \end{eqnarray*} Let $A=A(D_n)$ and
 $$\varphi(x)=x^\frac{n}{2}+c_{2}x^{\frac{n}{2}-1}+\cdots+c_{n-2}x.$$
Then $\phi(A)\equiv \varphi^2(A)\pmod{2}$. Noting that $\phi(A)=0$ by Cayley-Hamilton Theorem, it follows from Lemma \ref{me} that $\varphi(A)e\equiv 0\pmod{2}$.

 Let $$\varphi_1(x)=\frac{\varphi(x)}{x}=x^{\frac{x}{2}-1}+c_2x^{\frac{n}{2}-2}+\cdots+c_{n-2}.$$
 Then we have $A\varphi_1(A)e=\varphi(A)e\equiv 0\pmod{2}$, that is, $\varphi_1(A)e\in \Ker(A)$, over $\mathbb{F}_2$. Noting that $\varphi_1(A)e\in \Im(W(D_n))$, it follows from Lemma \ref{disj} that $\varphi_1(A)e=0$ over $\mathbb{F}_2$. This means that $A^{\frac{n}{2}-1}e$ (and hence $A^{k}e$ with any $k\ge \frac{n}{2}-1$) can be expressed as a linear combination of the $\frac{n}{2}-1$ vectors $e,Ae,\ldots,A^{\frac{n}{2}-2}e$. Thus, $\rank_2 (W(D_n))\le \frac{n}{2}-1$, as desired.

	\section{A lower bound on $\Delta_{n-2}(W(D_n))$}
We shall show that the second condition required in Lemma \ref{clue} is always satisfied.
	\begin{theorem}\label{up}
		$\Delta_{n-2}(W(D_n))\mid 2^{\frac{n}{2}-1}$ for $n\equiv 0\pmod{4}$.
	\end{theorem}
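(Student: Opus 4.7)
The plan is to prove Theorem \ref{up} by exhibiting a single $(n-2)\times(n-2)$ minor of $W(D_n)$ with absolute value $2^{\frac{n}{2}-1}$; since every such minor is a multiple of $\Delta_{n-2}(W(D_n))$, this forces $\Delta_{n-2}(W(D_n))\mid 2^{\frac{n}{2}-1}$. (Combined with Theorem \ref{rk2}, which already gives $2^{\frac{n}{2}-1}\mid \Delta_{n-2}(W(D_n))$, one would in fact get equality.)

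My candidate is $M_n$, the submatrix of $W(D_n)$ obtained by deleting the first two rows and the last two columns, so $(M_n)_{ij}=(A^{j-1}e)_{i+2}$ for $1\le i,j\le n-2$. Direct computation yields $|\det M_4|=2$ and $|\det M_8|=8$, matching the target. To handle general $n$, I would exploit the twin symmetry at vertices $1$ and $2$: swapping them is an automorphism of $D_n$, so rows $1$ and $2$ of $W(D_n)$ coincide and the subspace $S=\{v\in\mathbb{R}^n:v_1=v_2\}$ is $A$-invariant. Identifying $S\cong\mathbb{R}^{n-1}$ by retaining the common value of $v_1,v_2$ just once, the operator $A|_S$ becomes the asymmetric tridiagonal
$$\tilde A=\begin{pmatrix}0 & e_1^{\T}\\ 2e_1 & T'\end{pmatrix},$$
where $T'$ is the $(n-2)\times(n-2)$ adjacency matrix of the path on vertices $\{3,\ldots,n\}$ and $e_1$ is the first standard basis vector of $\mathbb{R}^{n-2}$. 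Writing $b_k\in\mathbb{Z}^{n-2}$ for the restriction of $A^ke$ to these vertices gives $M_n=[b_0,b_1,\ldots,b_{n-3}]$ together with the clean recurrence $b_{k+1}=T'b_k+2(b_{k-1})_1 e_1$ for $k\ge 1$, with initial data $b_0=\mathbf{1}$ and $b_1=T'\mathbf{1}+2e_1$.

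Iterating this recurrence yields the matrix identity $M_n=\Omega+2\,K\,\mathcal{A}$, where $\Omega=[T'^k\mathbf{1}]_{k=0}^{n-3}$ collects the path-walk vectors, the Krylov matrix $K=[e_1,T'e_1,\ldots,T'^{n-3}e_1]$ is upper triangular with $1$'s on the diagonal---hence unimodular---because the unique length-$(j-1)$ walk on the path from endpoint $1$ to vertex $j$ kills the strictly lower-triangular entries, and $\mathcal{A}$ is a strictly upper-triangular integer matrix whose entries encode the boundary sequence $(A^ke)_3$. Left-multiplying by $K^{-1}$ therefore yields $\det M_n=\pm\det(K^{-1}\Omega+2\mathcal{A})$. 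From here I plan to proceed by induction on $n$ in steps of $4$, aiming for the reduction $|\det M_{n+4}|=4|\det M_n|$; combined with the base case $|\det M_4|=2$, this would give $|\det M_n|=2^{\frac{n}{2}-1}$ and hence the theorem.

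The main obstacle is carrying out the inductive step cleanly. Extending the path by four vertices enlarges $\Omega$, $K$ and $\mathcal{A}$ in intertwined ways, and showing that the combined effect on the determinant is exactly multiplication by $4$ (with no odd-prime contamination) requires delicate control of the Chebyshev-style entries of $\Omega$ and $K$ together with the $2$-adic behaviour of the walk-count sequence $(A^ke)_3$. A natural alternative would be a more direct route: find an explicit sequence of unimodular row and column operations that reduces $K^{-1}\Omega+2\mathcal{A}$ to the block-diagonal form $\diag(I_{\frac{n}{2}-1},\,2I_{\frac{n}{2}-1})$, yielding the determinant immediately; the main difficulty there is clearing the off-diagonal blocks while preserving integrality and controlling the $2$-adic valuations.
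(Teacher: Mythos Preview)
Your overall strategy---exhibit a single $(n-2)\times(n-2)$ minor of $W(D_n)$ of absolute value $2^{\frac{n}{2}-1}$---is exactly what the paper does, and your decomposition $M_n=\Omega+2K\mathcal{A}$ with $K$ unimodular is a sensible first step. The gap is that you never actually compute $\det M_n$: both the inductive step $|\det M_{n+4}|=4|\det M_n|$ and the alternative ``explicit unimodular reduction'' are left as intentions, and you yourself flag them as the main obstacles. Nothing in the write-up controls the odd part of $\det M_n$ for general $n$, so as it stands the argument proves the theorem only for $n\in\{4,8\}$. The difficulty you anticipate is real: the entries of $\mathcal{A}$ are the boundary walk-counts $(A^k e)_3$ in $D_n$, which depend on $n$ once $k$ is large, so passing from $n$ to $n+4$ changes $\Omega$, $K$ and $\mathcal{A}$ in an entangled way.

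The paper avoids this by a different, spectral route. Instead of reducing only by the twin symmetry (dimension $n-1$), it introduces an $n\times(n-2)$ intertwining matrix $C$ (its last row encodes a second linear relation, essentially the other generator of $\Ker A$) satisfying $AC=CB$ for a concrete $(n-2)\times(n-2)$ matrix $B$. This yields $\tilde{W}(D_n)=W(B)$, where $\tilde{W}(D_n)$ is the minor obtained by deleting rows $1$ and $n$ and the last two columns. The point of $B$ is that its transpose has explicit eigenvectors $\xi_k$ built from cosines $\cos j\alpha_k$ with $\alpha_k=\frac{(2k-1)\pi}{2(n-1)}$; plugging these into the determinant formula of Lemma~\ref{relWA} makes the Vandermonde-type factor cancel against $\det[\xi_1,\ldots,\xi_{n-2}]$, and the remaining product $\prod_k e^\T\xi_k$ is evaluated to $\pm 2^{\frac{n}{2}-1}$ via standard trigonometric identities. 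If you want to salvage your approach, the missing ingredient is a closed-form evaluation of $\det M_n$; the paper's experience suggests that diagonalising an auxiliary $(n-2)\times(n-2)$ matrix explicitly is more tractable than the combinatorial induction you outline.
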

Let $n$ be a fixed positive integer satisfying $n\equiv 0\pmod{4}$ and we assume further that $n\ge 8$. (Theorem \ref{up} can be easily checked by hand for $n=4$.) Define two matrices

		$$B=\begin{bmatrix}
	0 &1 &{}&{}&{} &{}&{}  &{}      \\
	2 &0 &1 &{}&{} &{}&{}  &{}  \\
	{}&1 &0 & 1&{} &{}&{}  &{}\\
	{}&{}&1 & 0&1  &{}&{}  &{}  \\
{} &{}&{}  & \ddots       &\ddots    &\ddots &{}  \\
{} &{}&{}  &{} &1       &0     &1   \\
1  &0 &-1  &0           &\cdots &2     &0\\
	\end{bmatrix}_{(n-2)\times (n-2)}$$
	and
	$$C=\begin{bmatrix}
1&{}&{}&{}&{}&{}&{}\\
1&{}&{}&{}&{}&{}&{}\\
{}&1&{}&{}&{}&{}&{}\\
{}&{}&1&{}&{}&{}&{}\\
{}&{}&{}&1&{}&{}&{}\\
{}&{}&{}&{}&\ddots&{}&{}\\
{}&{}&{}&{}&{}&1&{}\\
{}&{}&{}&{}&{}&{}&1\\
1&0&-1&0&\cdots&1&0
	\end{bmatrix}_{n\times(n-2)},
	$$
	where the last rows (except the last two entries) of $B$ and $C$ repeat the same pattern $(1,0,-1,0)$ cyclically. The following identity can be verified in a routine way.	
	\begin{lemma}
	Let $A=A(D_n)$.  Then $AC=CB$.
	\end{lemma}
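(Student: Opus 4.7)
The plan is to verify the matrix identity $AC = CB$ entry by entry, scanning row by row. Since $A$ is the adjacency matrix of $D_n$, row $i$ of $A$ is the indicator vector of the neighborhood of vertex $i$, so $(AC)_{i,\cdot}$ is just the sum of the rows of $C$ indexed by those neighbors. On the other side, for every $i \ne n$, row $i$ of $C$ is a single standard basis vector, so $(CB)_{i,\cdot}$ is simply a single row of $B$. The verification therefore decomposes into a small collection of sparse-vector identities dictated by the block structure of $A$, $B$, $C$.

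The routine cases take care of themselves. For the \emph{interior rows} $4 \le i \le n-2$, row $i$ of $A$ equals $e_{i-1}^{\T} + e_{i+1}^{\T}$, rows $i-1$ and $i+1$ of $C$ are the identity-block rows $e_{i-2}^{\T}$ and $e_i^{\T}$, and their sum equals row $i-1$ of $B$ by the tridiagonal $(1,0,1)$ pattern in the interior of $B$. For the \emph{leaf rows} $i = 1, 2$ and the \emph{branch row} $i = 3$, a few direct checks suffice; here the entry $B_{2,1} = 2$ is exactly what absorbs the doubled contribution $A(e_1 + e_2) = 2e_3$ caused by the two leaves of $D_n$ sharing vertex $3$.

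The two cases that genuinely use the cyclic bottom row $\tau^{\T} := (1, 0, -1, 0, \ldots, 1, 0)$ of $C$ are $i = n-1$ and $i = n$. For $i = n-1$ one finds
\[
(AC)_{n-1,\cdot} = (\text{row } n-2 \text{ of } C) + (\text{row } n \text{ of } C) = e_{n-3}^{\T} + \tau^{\T},
\]
which matches the last row of $B$ because $B$'s last row is precisely $\tau^{\T} + e_{n-3}^{\T}$ (the ``$+1$'' that turns the cyclic value $1$ at position $n-3$ into the $2$ shown in the definition of $B$). For $i = n$ we have $(AC)_{n,\cdot} = e_{n-2}^{\T}$, so the remaining task is to show $\tau^{\T} B = e_{n-2}^{\T}$. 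Since $\tau_{n-2} = 0$ (as $n-2 \equiv 2 \pmod{4}$), the last row of $B$ does not contribute to this product, and expansion over the odd-indexed rows of $B$ gives
\[
\tau^{\T} B = e_2^{\T} + \sum_{k=1}^{(n-4)/2} (-1)^k \bigl(e_{2k}^{\T} + e_{2k+2}^{\T}\bigr).
\]
The coefficients of $e_{2m}^{\T}$ for $1 \le m \le (n-4)/2$ telescope to $0$, leaving only $(-1)^{(n-4)/2} e_{n-2}^{\T}$, which equals $e_{n-2}^{\T}$ because $4 \mid n$ makes $(n-4)/2$ even.

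The main obstacle is this last telescoping identity: it is where the $4$-periodic coefficients of $\tau$ interact nontrivially with the tridiagonal shape of $B$, and it is the only place where the hypothesis $n \equiv 0 \pmod{4}$ is actually used. Every other row check is direct bookkeeping from the sparsity of $A$, $B$, and $C$.
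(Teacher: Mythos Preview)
Your verification is correct and is exactly the ``routine'' entrywise check the paper alludes to but does not spell out. The one place you go beyond mere bookkeeping---the telescoping computation of $\tau^{\T}B=e_{n-2}^{\T}$, which is where $n\equiv 0\pmod 4$ enters---is handled cleanly and accurately.
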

For an $m\times m$ matrix $M$, the walk matrix of $M$, denoted by $W(M)$, is the matrix $[e,Me,\ldots,M^{m-1}e]$. Let $\tilde{W}(D_n)$ denote the $(n-2)\times(n-2)$ matrix obtained from $[e,Ae,\ldots,A^{n-3}e]$ by removing the first and the last rows.
\begin{corollary}\label{wdb}
 $\tilde{W}(D_n)=W(B)$.
\end{corollary}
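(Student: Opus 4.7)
The plan is to exploit the intertwining relation $AC=CB$ furnished by the preceding lemma, which by induction yields $A^kC=CB^k$ for every integer $k\ge 0$. Beyond this, the argument hinges on two elementary observations about the shape of $C$: first, writing $\tilde{e}$ for the all-ones vector of length $n-2$, one has $C\tilde{e}=e$; second, deleting the first and last rows of $C$ yields the $(n-2)\times(n-2)$ identity matrix (since rows $2,3,\ldots,n-1$ of $C$ have a single $1$ in columns $1,2,\ldots,n-2$ respectively).

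Granted these two facts, the claim follows immediately. For each $k\ge 0$,
$$A^k e \;=\; A^k C\tilde{e} \;=\; CB^k\tilde{e},$$
and deleting the first and last entries of the right-hand side recovers $B^k\tilde{e}$ by the second observation. Assembling these vectors as columns for $k=0,1,\ldots,n-3$, the left-hand side becomes $\tilde{W}(D_n)$ and the right-hand side becomes $W(B)$, which is the claim.

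The second observation is a direct inspection of $C$. The only step needing computation is the identity $C\tilde{e}=e$, and within that only the last coordinate $(C\tilde{e})_n$, i.e.\ the sum of the entries in the bottom row of $C$, is not obvious. The hypothesis $4\mid n$ enters precisely here: the first $n-4$ entries of that row form exactly $(n-4)/4$ complete periods of $(1,0,-1,0)$, each summing to $0$, while the two remaining entries $1,0$ contribute $1$, so the full row sum is $1$ as required. Since all other rows sum to $1$ by inspection, this completes the verification.

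There is no serious obstacle in this corollary; the substantive content is simply that the bottom rows of $B$ and $C$ have been calibrated so that both $AC=CB$ and $C\tilde{e}=e$ hold simultaneously, which is precisely why the definitions of $B$ and $C$ look as specific as they do.
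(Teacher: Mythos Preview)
Your argument is essentially the same as the paper's: both use $A^kC=CB^k$, the identity $Ce_{n-2}=e_n$ (your $C\tilde{e}=e$), and the fact that rows $2,\ldots,n-1$ of $C$ form the identity, to conclude $[e,Ae,\ldots,A^{n-3}e]=CW(B)$ and then strip the first and last rows. The only difference is that you spell out the verification of the bottom-row sum of $C$, which the paper leaves implicit.
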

\begin{proof}
	As $AC=CB$, we have $A^kC=CB^k$ for any nonnegative integer $k$. Clearly, $Ce_{n-2}=e_{n}$. Thus, $A^ke_n=A^kCe_{n-2}=CB^ke_{n-2}$ and hence
	$$[e_n,Ae_n,\ldots,A^{n-3}e_n]=CW(B).$$
	Noting that removing both the first and the last rows from $[e_n,Ae_n,\ldots,A^{n-3}e_n]$  and  $C$ results in $\tilde{W}(D_n)$ and the identity matrix, we find that $\tilde{W}(D_n)=W(B)$, as desired.
\end{proof}
The following lemma is a slight modification of a result due to Mao, Liu and Wang \cite[Lemma 3.3]{mao2015LAA}. The current form appeared in \cite{wang2022LAA}.
	\begin{lemma}[\cite{mao2015LAA,wang2022LAA}]\label{relWA}
		Let $M$ (and hence $M^\T$) be a real $m\times m$  matrix which is diagonalizable over the real field $\mathbb{R}$. Let  $\xi_1,\xi_2,\ldots,\xi_m$ be  $m$ independent eigenvectors of $M^\T$ corresponding to  eigenvalues $\lambda_1,\lambda_2,\ldots,\lambda_m$, respectively. Then we have
		$$\det W(M)=\frac{\prod_{1\le k<j\le m} (\lambda_j-\lambda_k)\prod_{j=1}^m e^\T\xi_j}{\det[\xi_1,\xi_2,\ldots,\xi_m]}.$$
	\end{lemma}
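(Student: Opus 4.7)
The plan is to prove the formula by a direct computation that sandwiches $W(M)$ between the eigenvector matrix of $M^\T$ and the walk matrix data, reducing everything to a Vandermonde determinant.

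First I would set up notation. Put $X=[\xi_1,\xi_2,\ldots,\xi_m]$ and $\Lambda=\diag[\lambda_1,\lambda_2,\ldots,\lambda_m]$, so that the hypothesis becomes $M^\T X = X\Lambda$, equivalently $\xi_j^\T M = \lambda_j\, \xi_j^\T$ for every $j$. Iterating, one has $\xi_j^\T M^{i-1} = \lambda_j^{i-1}\xi_j^\T$ for all $i \ge 1$. Since $\xi_1,\ldots,\xi_m$ are assumed independent, $X$ is invertible.

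Next I would compute the matrix product $X^\T W(M)$ entrywise. Its $(j,i)$-entry is
$$\xi_j^\T\bigl(M^{i-1}e\bigr)=\bigl(\xi_j^\T M^{i-1}\bigr)e=\lambda_j^{i-1}\bigl(\xi_j^\T e\bigr)=(e^\T\xi_j)\,\lambda_j^{i-1}.$$
Factoring the scalar $e^\T\xi_j$ out of the $j$-th row gives the matrix identity
$$X^\T W(M)=\diag[e^\T\xi_1,e^\T\xi_2,\ldots,e^\T\xi_m]\cdot V,$$
where $V$ is the Vandermonde matrix with $V_{j,i}=\lambda_j^{i-1}$.

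Finally I would take determinants on both sides. The diagonal factor contributes $\prod_{j=1}^{m}e^\T\xi_j$, the Vandermonde determinant contributes $\prod_{1\le k<j\le m}(\lambda_j-\lambda_k)$, and on the left one gets $\det(X^\T)\det W(M)=\det[\xi_1,\ldots,\xi_m]\det W(M)$. Dividing through by the nonzero quantity $\det[\xi_1,\ldots,\xi_m]$ yields the claimed formula. There is no serious obstacle here since the only nontrivial ingredient is the Vandermonde evaluation; it is worth noting that the formula is consistent when eigenvalues coincide, because in that case $M$ has minimal polynomial of degree strictly less than $m$, which forces $W(M)$ to be singular, in agreement with the right-hand side vanishing through the Vandermonde factor.
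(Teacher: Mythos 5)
Your proof is correct; the paper itself states this lemma without proof, citing \cite{mao2015LAA,wang2022LAA}, and your argument (computing $X^\T W(M)$ entrywise via $\xi_j^\T M^{i-1}e=\lambda_j^{i-1}e^\T\xi_j$ to obtain a diagonal matrix times a Vandermonde matrix, then taking determinants) is exactly the standard derivation used in those references. No gaps.
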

\begin{definition}\normalfont{
	 For $k\in\{1,2,\ldots,n-1\}\setminus\{\frac{n}{2}\}$,	let $\alpha_k=\frac{2k-1}{2(n-1)}\pi$ and
		\begin{equation}\label{defxi}
		\xi_k=\begin{bmatrix}
		2\cos(n-3)\alpha_k\\
		2\cos(n-4)\alpha_k\\
		\vdots\\
		\vdots\\
		\vdots\\
		2\cos 2\alpha_k\\
		2\cos \alpha_k\\
		1		
		\end{bmatrix}+
		\begin{bmatrix}
			2\cos(n-7)\alpha_k\\
				\vdots\\
		2\cos\alpha_k\\
	
		1\\
		0\\
		0\\
		0\\
		0		
		\end{bmatrix}+\cdots+\begin{bmatrix}
				2\cos\alpha_k\\
				1\\
				0\\
				\vdots\\
		0\\
		0\\
		0\\
		0
		\end{bmatrix}.
		\quad (\text{sum of $\frac{n}{4}$ terms})
		\end{equation}
}
\end{definition}
\begin{remark}\label{usm}
Let $U$ be the  upper shift matrix: 
\begin{equation*}\begin{small}
U=\begin{bmatrix}
0&1&&&\\
&0&1&&\\
&&\ddots&\ddots&\\
&&&0&1\\
&&&&0
\end{bmatrix}_{(n-2)\times (n-2)}.
\end{small}	\end{equation*} 
 Then we can rewrite Eq.~\eqref{defxi} in a more compact way :
$$\xi_k=\sum\limits_{i=0}^{\frac{n}{4}-1}U^{4i}\tau_k,$$
where $\tau_k$ is the first term of the summation in Eq.~\eqref{defxi}.
\end{remark} 
\begin{lemma}
 $B^\T \xi_k=(2\cos \alpha_k)\xi_k$ 	for each $k\in\{1,2,\ldots,n-1\}\setminus\{\frac{n}{2}\}$.
\end{lemma}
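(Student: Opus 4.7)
The plan is to derive a closed-form expression for the entries of $\xi_k$ and then verify the eigenvalue equation row by row. Throughout, the pivotal fact is $(n-1)\alpha_k = (2k-1)\pi/2$, which yields $\cos((n-1)\alpha_k) = 0$, $\sin((n-1)\alpha_k) = (-1)^{k-1}$, and consequently $\sin((n-j)\alpha_k) = (-1)^{k-1}\cos((j-1)\alpha_k)$ for every $j$.

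Using the compact form from Remark \ref{usm}, write $(\xi_k)_j = \sum_{s=0}^{\lfloor(n-2-j)/4\rfloor}(\tau_k)_{j+4s}$. Separating off the exceptional endpoint $(\tau_k)_{n-2}=1$ (which enters only when $j \equiv 2 \pmod 4$), the remainder is a geometric sum of cosines in an arithmetic progression with common difference $4\alpha_k$. This sum telescopes through the identity
$$2\sin(2\alpha_k)\cos((r+4s)\alpha_k) = \sin((r+4s+2)\alpha_k) - \sin((r+4s-2)\alpha_k).$$
Repackaging the four resulting $j\bmod 4$ cases, one arrives at the closed form
$$(\xi_k)_j = \frac{(-1)^{k-1}\cos((j-1)\alpha_k)}{\sin(2\alpha_k)} + \frac{\delta_j}{2\cos\alpha_k},$$
where $\delta_j$ equals $1,0,-1,0$ according as $j \equiv 1,2,3,0 \pmod 4$. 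In particular $(\xi_k)_{n-2} = 1$, because $\delta_{n-2}=0$ and $\cos((n-3)\alpha_k) = (-1)^{k-1}\sin(2\alpha_k)$.

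Set $A = (-1)^{k-1}/\sin(2\alpha_k)$, $B = 1/(2\cos\alpha_k)$, and $\lambda = 2\cos\alpha_k$, so that $(\xi_k)_j = A\cos((j-1)\alpha_k) + B\delta_j$ and $\lambda B = 1$. For each bulk row $3 \le i \le n-4$, reading off the structure of $B^\T$ gives $(B^\T\xi_k)_i = (\xi_k)_{i-1} + (\xi_k)_{i+1} + \epsilon_i(\xi_k)_{n-2}$, where $\epsilon_i \in \{1,0,-1,0\}$ is the cyclic pattern in the last row of $B$; crucially $\epsilon_i = \delta_i$. The product-to-sum identity $\cos((i-2)\alpha_k)+\cos(i\alpha_k) = \lambda\cos((i-1)\alpha_k)$ handles the cosine part, and the cancellation $\delta_{i-1}+\delta_{i+1}=0$ (immediate on each of the four residues modulo $4$) kills the discrete cross-terms, leaving the equation $\delta_i(\xi_k)_{n-2} = \lambda B \delta_i$, which holds. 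The four boundary rows $i \in \{1,2,n-3,n-2\}$, involving the special entries $B^\T_{1,2} = B^\T_{n-3,n-2}=2$ and $B^\T_{1,n-2}=1$, are each a short trigonometric identity; the most involved is row $n-2$, which reduces to $\sin 3\alpha_k + \sin\alpha_k = 2\sin(2\alpha_k)\cos\alpha_k$.

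The main obstacle is the derivation of the closed form: summing the cosine progression and recognizing the four-case residue dependence as the single discrete correction $\delta_j$. Once this is done, the verification reduces to a handful of elementary trigonometric identities; the structural point --- that the cyclic pattern $(1,0,-1,0)$ in the last row of $B$ is exactly tuned to the sequence $\delta_j$ arising from the boundary defect $(\tau_k)_{n-2}=1$ (rather than the ``natural'' value $2\cos 0 = 2$) --- then drops out automatically from the bulk-row computation.
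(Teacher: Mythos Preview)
Your argument is correct, and it takes a genuinely different route from the paper's proof.

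The paper does not produce a closed form for the entries of $\xi_k$. Instead it packages the $n-2$ entries into blocks of four, $a_i,b_i,c_i,d_i$ (each a partial sum of cosines in arithmetic progression with common difference $4\alpha_k$), and verifies the eigenvector equation as the system of four recurrences
\[
c_i+a_i+a_1=\lambda b_i,\quad b_i+d_{i-1}=\lambda a_i,\quad a_i+c_{i-1}-a_1=\lambda d_{i-1},\quad d_{i-1}+b_{i-1}=\lambda c_{i-1},
\]
each proved by a single sum-to-product identity. The special top row then needs the extra identity $c_m=a_m$, which is where the arithmetic of $\alpha_k$ enters (via $\sin 4m\alpha_k=\sin(4m-2)\alpha_k$).

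You instead push the telescoping sum all the way, using $(n-1)\alpha_k=(2k-1)\pi/2$ from the outset, to obtain the single formula $(\xi_k)_j=A\cos((j-1)\alpha_k)+B\delta_j$ with the $4$-periodic correction $\delta_j$. This makes the bulk verification a two-line computation, and the key structural observation --- that the cyclic pattern $(1,0,-1,0)$ in the last row of $B$ coincides with $\delta_i$ --- becomes transparent rather than being spread across four separate recurrences. The paper's $c_m=a_m$ step has no direct analogue in your argument; its role is absorbed into the very derivation of the closed form (it is what forces the boundary term of the telescope to match $(\tau_k)_{n-2}=1$ in the $j\equiv 2$ case). Your organization is cleaner and exposes more of the underlying structure, at the cost of having to first derive and justify the closed form; the paper's is more hands-on but avoids that preliminary step.

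One minor remark: your claim that row $n-2$ is ``the most involved'' is off --- in fact $(B^\T\xi_k)_{n-2}=(\xi_k)_{n-3}=2\cos\alpha_k=\lambda\cdot 1$ is immediate from your closed form. The boundary rows $i=1$ and $i=n-3$ (where the entry $2$ appears in $B^\T$) are the ones requiring a short extra check, and you handle those correctly.
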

\begin{proof}
	Let $m=n/4$ and $\eta=[c_m,b_m,a_m,d_{m-1},c_{m-1},b_{m-1},a_{m-1},\ldots,d_1,c_1,b_1,a_1]^\T\in \mathbb{R}^{n-1}$, where
	\begin{equation}
	\begin{cases}
	a_i=1+\sum\limits_{t=1}^{i-1}2\cos 4t\alpha_k,\quad i=1,\ldots,m\\
	b_i=\sum\limits_{t=0}^{i-1}2\cos (4t+1)\alpha_k,\quad i=1,\ldots,m\\
	c_i=\sum\limits_{t=0}^{i-1}2\cos (4t+2)\alpha_k,\quad i=1,\ldots,m\\
	d_i=\sum\limits_{t=0}^{i-1}2\cos (4t+3)\alpha_k,\quad i=1,\ldots,m-1.\\
	\end{cases}
	\end{equation}
	Note that removing the first entry $c_m$ from $\eta$ results in  $\xi_k$.
	
\noindent\emph{Claim} 1. For each $i\in\{2,3,\ldots,m\}$, it holds that
	\begin{equation}\label{abcd}
\begin{cases}
c_i+a_i+a_1=(2\cos \alpha_k)b_i,\\
b_i+d_{i-1}=(2\cos \alpha_k)a_i,\\
a_i+c_{i-1}-a_1=(2\cos\alpha_k)d_{i-1},\quad \\
d_{i-1}+b_{i-1}=(2\cos\alpha_k)c_{i-1}.\\
\end{cases}
\end{equation}

We only verify the first equality and the other equalities can be settled in a similar way. Note that $a_1=1$. It follows from the formula $\cos x+\cos y=2\cos\frac{x+y}{2}\cos\frac{x-y}{2}$ that \begin{eqnarray*}
c_i+a_i+a_1&=&\sum\limits_{t=0}^{i-1}2\cos (4t+2)\alpha_k+2+\sum\limits_{t=1}^{i-1}2\cos 4t\alpha_k\\
	&=&2\sum\limits_{t=0}^{i-1}(\cos (4t+2)\alpha_k+\cos 4t\alpha_k)\\
	&=&2\cos\alpha_k \sum\limits_{t=0}^{i-1}2\cos(4t+1)\alpha_k\\
	&=&(2\cos\alpha_k) b_i,
\end{eqnarray*}
as desired.

\noindent\emph{Claim} 2. $c_m=a_m.$

By the formula $\cos x\sin y=\frac{1}{2}(\sin(x+y)-\sin(x-y))$, we have
$$c_m\cdot\sin2\alpha_k=\sum\limits_{t=0}^{m-1}2\cos (4t+2)\alpha_k\cdot\sin 2\alpha_k=\sin 4m\alpha_k,$$
and
$$a_m\cdot\sin2\alpha_k=\sin 2\alpha_k+\sum\limits_{t=1}^{m-1}2\cos 4t\alpha_k\cdot\sin 2\alpha_k=\sin (4m-2)\alpha_k.$$

Recall that $\alpha_k=\frac{2k-1}{2(n-1)}\pi$ and $k\in \{1,2,\ldots,n-1\}\setminus\{\frac{n}{2}\}$. As
$4m\alpha_k+(4m-2)\alpha_k=(2n-2)\alpha_k=(2k-1)\pi$, we obtain that $\sin 4m\alpha_k=\sin (4m-2)\alpha_k$. Moreover, it is easy to see that $2\alpha_k\in (0,\pi)\cup(\pi,2\pi)$ and hence $\sin 2\alpha_k\neq 0$. Thus, we must have $c_m=a_m$, as desired.

By Claim 2, the first equality in Eq.~\eqref{abcd} for $i=m$ reduces to $2a_m+a_1=(2\cos\alpha_k)b_m$. Consequently, we can write Eq. \eqref{abcd} in matrix form as follows:

\begin{equation}\label{Bt1}
	\begin{bmatrix}
	0 &2 &{}&{}&{} &{}&{}  &{1}      \\
	1 &0 &1 &{}&{} &{}&{}  &{0}  \\
	{}&1 &0 & 1&{} &{}&{}  &{-1}\\
	{}&{}&1 & 0&1  &{}&{}  &{0}  \\
	{} &{}&{}  & \ddots       &\ddots    &\ddots &{}&{\vdots}  \\
	{} &{}&{}  &{} &1       &0     &1  &0
	\end{bmatrix}_{(n-4)\times (n-2)}
	\begin{bmatrix}
	b_m\\a_{m}\\\vdots\\\vdots\\d_1\\c_1\\b_1\\a_1
	\end{bmatrix}
	=2\cos \alpha_k\begin{bmatrix}
	b_m\\a_{m}\\\vdots\\\vdots\\d_1\\c_1
	\end{bmatrix}.
\end{equation}
Note that
\begin{equation}\label{Bt2}
\begin{bmatrix}
{0} &{0}&\cdots   & \cdots       &0   &1 &0&2  \\
{0} &{0}&\cdots   &{\cdots} &0       &0     &1  &0
\end{bmatrix}_{(n-4)\times 2}
\begin{bmatrix}	b_m\\a_{m}\\\vdots\\\vdots\\d_1\\c_1\\b_1\\a_1
\end{bmatrix}
=\begin{bmatrix}
c_1+2a_1\\
b_1
\end{bmatrix}
=\begin{bmatrix}
2\cos 2\alpha_k+2\\
2\cos\alpha_k
\end{bmatrix}=
2\cos \alpha_k\begin{bmatrix}
b_1\\a_1
\end{bmatrix}.
\end{equation}
Combining Eqs.~\eqref{Bt1} and \eqref{Bt2} leads to $B^\T \xi_k=(2\cos\alpha_k)\xi_k$. The proof is complete.
\end{proof}
	\begin{lemma}[\cite{wang2022LAA}]\label{prodsin}For any $m\ge 2$,
	$$\prod_{j=1}^{m-1}\sin\frac{2j-1}{4(m-1)}\pi=2^{\frac{3}{2}-m}.$$
\end{lemma}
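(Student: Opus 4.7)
The plan is to reduce this to a standard trigonometric identity coming from the factorization of $x^N+1$. Set $N=2(m-1)$ so that the product becomes $\prod_{j=1}^{m-1}\sin\frac{(2j-1)\pi}{2N}$. The natural companion object is the full product $P_N:=\prod_{k=0}^{N-1}\sin\frac{(2k+1)\pi}{2N}$, which I will first evaluate in closed form and then relate to the half-product we actually want.

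To evaluate $P_N$, I will start from $x^N+1=\prod_{k=0}^{N-1}\bigl(x-e^{i(2k+1)\pi/N}\bigr)$ and specialize to $x=1$, obtaining $\prod_{k=0}^{N-1}(1-e^{i(2k+1)\pi/N})=2$. Using the identity $1-e^{i\theta}=-2i\sin(\theta/2)\,e^{i\theta/2}$ with $\theta=(2k+1)\pi/N$, the left side becomes $(-2i)^N\bigl(\prod_{k=0}^{N-1}e^{i(2k+1)\pi/(2N)}\bigr)P_N$. The product of exponentials simplifies via $\sum_{k=0}^{N-1}(2k+1)=N^2$ to $e^{iN\pi/2}$, and since $(-2i)^N e^{iN\pi/2}=2^N$, this yields $P_N=2^{1-N}$.

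Next I will exploit the symmetry $\sin\frac{(2k+1)\pi}{2N}=\sin\frac{(2(N-1-k)+1)\pi}{2N}$, which pairs index $k$ with $N-1-k$. Since $N=2(m-1)$ is even, no index is fixed, and the $N$ factors split into $m-1$ matching pairs. Therefore $P_N=\bigl(\prod_{j=1}^{m-1}\sin\frac{(2j-1)\pi}{4(m-1)}\bigr)^2=2^{1-N}=2^{3-2m}$. Because all the sines in our target product have arguments in $(0,\pi/2)$ (indeed $(2j-1)/(4(m-1))<1/2$ for $j\le m-1$), they are positive and we may take the positive square root to conclude $\prod_{j=1}^{m-1}\sin\frac{(2j-1)\pi}{4(m-1)}=2^{3/2-m}$.

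I do not anticipate any real obstacle here: the identity is essentially an exercise in Chebyshev/cyclotomic technology, and the only subtle point is checking that the parity of $N$ makes the pairing argument work cleanly (which it does since $m\ge 2$). An alternative route would be to invoke the standard identity $\prod_{k=1}^{N-1}\sin\frac{k\pi}{N}=N\cdot 2^{1-N}$ and take a ratio against $\prod_{k=1}^{(N-2)/2}\sin\frac{2k\pi}{N}$ to isolate the odd-indexed sines, but the direct route through $x^N+1$ above seems the cleanest.
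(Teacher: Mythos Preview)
Your proof is correct. The factorization of $x^N+1$ at $x=1$ yields $P_N=2^{1-N}$ cleanly, your computation of $(-2i)^N e^{iN\pi/2}=2^N$ is right, the pairing $k\leftrightarrow N-1-k$ is fixed-point-free precisely because $N=2(m-1)$ is even, and the positivity check for the square root is fine since $(2j-1)/(4(m-1))<1/2$ for $1\le j\le m-1$.

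As to comparison: the paper does not actually prove this lemma. It is quoted verbatim from \cite{wang2022LAA} and used as a black box in the evaluation of $\prod e^\T\xi_k$. So you have supplied a self-contained argument where the paper simply cites the result. Your route through the cyclotomic factorization of $x^N+1$ is probably the most natural one; the alternative you sketch (splitting $\prod_{k=1}^{N-1}\sin\frac{k\pi}{N}$ into odd- and even-indexed factors) would also work and is closer in spirit to how such identities are often derived in the Chebyshev-polynomial literature, but neither route has any real advantage over the other here.
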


\begin{lemma}\label{etxi}
$	\prod\limits_{\substack{k=1\\k\neq\frac{n}{2}}}^{n-1}e^\T\xi_k=\pm 2^{\frac{n}{2}-1}.$
\end{lemma}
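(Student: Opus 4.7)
The plan is to derive a closed-form expression for $e^\T\xi_k$ and then evaluate the product via classical sine-product identities. From Remark~\ref{usm}, $e^\T\xi_k = \sum_{i=0}^{n/4-1}e^\T U^{4i}\tau_k$, and since $e^\T U^{4i}$ has $4i$ leading zeros followed by ones, each summand equals the sum of the last $n-2-4i$ entries of $\tau_k$, namely $1+2\sum_{\ell=1}^{n-3-4i}\cos(\ell\alpha_k)$. Setting $\beta_k := \alpha_k/2$, the Dirichlet kernel identity gives $e^\T U^{4i}\tau_k = \sin((2n-5-8i)\beta_k)/\sin\beta_k$, and summing over $i$ by the standard formula $\sum_{i=0}^{m-1}\sin(\theta - i\phi) = \sin(m\phi/2)\sin(\theta - (m-1)\phi/2)/\sin(\phi/2)$ (with $m = n/4$) yields
$$e^\T\xi_k = \frac{\sin(n\beta_k)\sin((n-1)\beta_k)}{\sin\beta_k\sin(4\beta_k)}.$$
The identities $(n-1)\beta_k = (2k-1)\pi/4$ and $(2n-1)\beta_k = (2k-1)\pi/2 + \beta_k$, combined with a product-to-sum calculation, simplify the numerator to $\tfrac{1}{2}[\cos\beta_k + (-1)^{k-1}\sin\beta_k] = \tfrac{\sqrt{2}}{2}\sin(\pi/4 + (-1)^{k-1}\beta_k)$, so
$$e^\T\xi_k = \frac{\sin(\pi/4 + (-1)^{k-1}\beta_k)}{\sqrt{2}\,\sin\beta_k\sin(4\beta_k)}.$$

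Taking the product over the $n-2$ valid indices, the prefactor $(\sqrt{2})^{-(n-2)} = 2^{-(n/2-1)}$ will combine with the remaining sine products to give $\pm 2^{n/2-1}$. By Lemma~\ref{prodsin} with $m = n$ and $\sin\beta_{n/2} = \sin(\pi/4) = \sqrt{2}/2$, one obtains $\prod_{k\neq n/2}\sin\beta_k = 2^{2-n}$. For $\prod_{k\neq n/2}\sin(4\beta_k)$, the arguments $(2k-1)\pi/(n-1)$ are folded via $\sin(x+\pi) = -\sin x$ onto $\{\ell\pi/(n-1) : 1\le \ell \le n-2\}$, and the classical identity $\prod_{\ell=1}^{N-1}\sin(\ell\pi/N) = N/2^{N-1}$ (with $N = n-1$) yields $\prod_{k\neq n/2}\sin(4\beta_k) = (-1)^{n/2-1}(n-1)/2^{n-2}$.

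The main obstacle is the remaining factor $P := \prod_{k\neq n/2}\sin(\pi/4 + (-1)^{k-1}\beta_k)$. Splitting by parity of $k$ and reindexing: the odd-$k$ arguments $(n+2k-2)\pi/(4(n-1))$ produce $\prod_{\ell=n/4}^{3n/4-1}\sin(\ell\pi/(n-1))$, while the even-$k$ arguments (using $\sin(-x) = -\sin x$ on the $k > n/2$ half) produce $(-1)^{n/4-1}\bigl[\prod_{\ell=1}^{n/4-1}\sin(\ell\pi/(n-1))\bigr]^2$. The reflection $\sin(\ell\pi/(n-1)) = \sin((n-1-\ell)\pi/(n-1))$ shows that the complement of $\{n/4,\ldots,3n/4-1\}$ inside $\{1,\ldots,n-2\}$ is a doubled copy of $\{1,\ldots,n/4-1\}$, so $\prod_{\ell=1}^{n-2}\sin(\ell\pi/(n-1)) = \bigl[\prod_{\ell=1}^{n/4-1}\sin(\ell\pi/(n-1))\bigr]^2\cdot\prod_{\ell=n/4}^{3n/4-1}\sin(\ell\pi/(n-1))$. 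Substituting and cancelling the repeated squares leaves $P = (-1)^{n/4-1}(n-1)/2^{n-2}$. Putting everything together, the factors $(n-1)/2^{n-2}$ cancel and the remaining powers of $2$ and $-1$ combine to $\prod_{k\neq n/2}e^\T\xi_k = (-1)^{n/4}\cdot 2^{n/2-1} = \pm 2^{n/2-1}$, as required.
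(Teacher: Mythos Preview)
Your proof is correct. Both you and the paper arrive at the identical closed form
\[
e^\T\xi_k=\frac{\sin(n\beta_k)\sin((n-1)\beta_k)}{\sin\beta_k\sin(4\beta_k)}\qquad(\beta_k=\tfrac{1}{2}\alpha_k),
\]
and both compute $\prod_{k\neq n/2}\sin\beta_k=2^{2-n}$ via Lemma~\ref{prodsin}. The divergence is in how the remaining three sine factors are handled. The paper keeps the three factors separate and dispatches the ratio $\prod\sin(n\beta_k)/\prod\sin(4\beta_k)$ in one stroke: since $\gcd(2,n-1)=\gcd(n/4,n-1)=1$, both $\{2(2k-1)\bmod (n-1)\}$ and $\{\tfrac{n}{4}(2k-1)\bmod(n-1)\}$ sweep out the same residue set, so numerator and denominator are each $\pm\prod_{\ell=1}^{n-2}\sin\frac{\ell\pi}{n-1}$ and the ratio is $\pm1$; the factor $\prod\sin((n-1)\beta_k)=\pm 2^{1-n/2}$ is then read off directly from $(n-1)\beta_k=\frac{2k-1}{4}\pi$. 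You instead merge $\sin(n\beta_k)\sin((n-1)\beta_k)$ into $\frac{\sqrt2}{2}\sin(\pi/4+(-1)^{k-1}\beta_k)$ and evaluate both $\prod\sin(4\beta_k)$ and $\prod\sin(\pi/4+(-1)^{k-1}\beta_k)$ explicitly by reindexing onto $\{\ell\pi/(n-1)\}$ and invoking $\prod_{\ell=1}^{N-1}\sin(\ell\pi/N)=N/2^{N-1}$; the two copies of $(n-1)/2^{n-2}$ then cancel. The paper's residue argument is shorter and more conceptual, while your route is more hands-on and has the incidental benefit of pinning down the sign as $(-1)^{n/4}$.
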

\begin{proof}
	By the formula $\cos x\sin y=\frac{1}{2}(\sin(x+y)-\sin(x-y))$, we find that, for any positive integer $m$,
\begin{equation}\label{sumcos}
(1+2\cos\alpha_k+2\cos 2\alpha_k+\cdots+2\cos q\alpha_k)\sin \frac{1}{2}\alpha_k=\sin(q+\frac{1}{2})\alpha_k.
\end{equation}	
Write $S_k^{(q)}=1+2\cos\alpha_k+2\cos 2\alpha_k+\cdots+2\cos q\alpha_k$. Noting that
$$e^\T\xi_k=S_k^{(n-3)}+S_k^{(n-7)}+\cdots+S_k^{(1)},$$
it follows from Eq.~\eqref{sumcos} that
\begin{equation}\label{exi}
e^\T \xi_k=\frac{\sin(n-\frac{5}{2})\alpha_k+\sin(n-\frac{13}{2})\alpha_k+\cdots+\sin \frac{3}{2}\alpha_k}{\sin\frac{1}{2}\alpha_k}.
\end{equation}
Using the formula $\sin x \sin y=-\frac{1}{2}(\cos(x+y)-\cos(x-y))$, we obtain
$$(\sin(n-\frac{5}{2})\alpha_k+\sin(n-\frac{13}{2})\alpha_k+\cdots+\sin \frac{3}{2}\alpha_k)\sin 2\alpha_k=-\frac{1}{2}(\cos(n-\frac{1}{2})\alpha_k-\cos\frac{1}{2}\alpha_k)$$
and Eq.~\eqref{exi} reduces to
\begin{equation}\label{exi2}
e^\T \xi_k=-\frac{\cos(n-\frac{1}{2})\alpha_k-\cos\frac{1}{2}\alpha_k}{2\sin\frac{1}{2}\alpha_k\sin 2\alpha_k}=\frac{\sin\frac{n-1}{2}\alpha_k\sin\frac{n}{2}\alpha_k}{\sin\frac{1}{2}\alpha_k\sin 2\alpha_k}.
\end{equation}

\noindent\emph{Claim}. It holds that \begin{equation}\label{qu1}
\prod_{\substack{k=1\\k\neq\frac{n}{2}}}^{n-1}\frac{\sin\frac{n}{2}\alpha_k}{\sin 2\alpha_k}=\pm 1.
\end{equation}

Recall that $\alpha_k=\frac{2k-1}{2(n-1)}\pi$. Thus  $2\alpha_k=\frac{2k-1}{n-1}\pi$ and $\frac{n}{2}\alpha_k=\frac{n}{4}\frac{2k-1}{n-1}\pi$.
Note that $\gcd(2,n-1)=\gcd(\frac{n}{4},n-1)=1$ (as $4\mid n$). Thus, if $I$ is a complete system of residues modulo $n-1$ then so are  $\{2k-1\colon\,k\in I\}$ and $\{\frac{n}{4}(2k-1)\colon\,k\in I\}$. Moreover, both equations $2k-1\equiv 0\pmod{n-1}$ and $\frac{n}{4}(2k-1)\equiv 0\pmod{n-1}$  have a unique solution $k=\frac{n}{2}$ in the range $\{1,2,\ldots,n-1\}$. It follows that
\begin{equation*}\prod_{\substack{k=1\\k\neq\frac{n}{2}}}^{n-1}\sin\frac{n}{2}\alpha_k \quad \text{ and}\quad
\prod_{\substack{k=1\\k\neq\frac{n}{2}}}^{n-1} \sin 2\alpha_k
\end{equation*}
equal
\begin{equation*}
\pm \prod_{k=1}^{n-2} \sin \frac{k}{n-1}\pi.
\end{equation*}
This proves the claim.

Finally, as $\frac{n-1}{2}\alpha_k=\frac{2k-1}{4}\pi$, we find that $\sin\frac{n-1}{2}\alpha_k=\pm\frac{1}{2}\sqrt{2}$ and hence
\begin{equation}\prod_{\substack{k=1\\k\neq\frac{n}{2}}}^{n-1}\sin\frac{n-1}{2}\alpha_k =\pm 2^{1-\frac{n}{2}}.
\end{equation} Moreover,
\begin{equation}\label{ps2}\prod_{\substack{k=1\\k\neq\frac{n}{2}}}^{n-1}\sin\frac{1}{2}\alpha_k =\prod_{\substack{k=1\\k\neq\frac{n}{2}}}^{n-1}\sin\frac{2k-1}{4(n-1)}\pi=\sqrt{2} \prod_{k=1}^{n-1}\sin\frac{2k-1}{4(n-1)}\pi=2^{2-n},
\end{equation}
where the last equality follows from Lemma \ref{prodsin}. It follows from Eqs.~\eqref{exi2}-\eqref{ps2} that
$$\prod\limits_{\substack{k=1\\k\neq\frac{n}{2}}}^{n-1}e^\T\xi_k=\pm 2^{\frac{n}{2}-1},$$
completing the proof.
\begin{lemma}\label{detcos}
	It holds that
	\begin{equation}\label{vdm}
	\begin{small}
	\begin{vmatrix}
	1&1&\cdots&1\\
	2\cos \theta_1&2\cos \theta_2&\cdots&2\cos \theta_m\\
		2\cos 2\theta_1&2\cos 2\theta_2&\cdots&2\cos 2\theta_m\\
	\vdots&\vdots&&\vdots\\
	2\cos (m-1)\theta_1&2\cos (m-1)\theta_2&\cdots&2\cos (m-1)\theta_m\\
	\end{vmatrix}=	\prod\limits_{1\le j< i\le m}(2\cos \theta_i-2\cos \theta_j).
\end{small}
\end{equation}
\end{lemma}
\begin{proof}
Let $k\in\{1,2,\ldots,m-1\}$. Note that $\cos k\theta=T_k(\cos \theta)$, where $T_k(x)$ is the $k$-th Chebyshev polynomial of the first kind \cite{rivlin}. As the leading term of $T_k(x)$ is  $2^{k-1}x^{k}$, we obtain
 $$\cos k\theta-2^{k-1}(\cos \theta)^k\in \span\{1,\cos\theta,\ldots,(\cos\theta)^{k-1}\},$$
 or equivalently,
 $$2\cos k\theta-(2\cos \theta)^k\in \span\{1,2\cos\theta,\ldots,(2\cos\theta)^{k-1}\}.$$
 This means that there exist some constants $c_{k,0},c_{k,1},\ldots,c_{k,k-1}$ such that 
 \begin{equation*}
 2\cos k\theta-c_{k,0}-c_{k,1}(2\cos\theta)-\cdots-c_{k,k-1}(2\cos\theta)^{k-1}=(2\cos\theta)^k.
 \end{equation*}
 Therefore, using some evident row operations iteratively, the determinant in \eqref{vdm} can be changed into the well-known Vandermonde determinant:
 	\begin{equation*}\label{vdm2}
 \begin{small}
 \begin{vmatrix}
 1&1&\cdots&1\\
 2\cos \theta_1&2\cos \theta_2&\cdots&2\cos \theta_m\\
 (2\cos \theta_1)^2&(2\cos \theta_2)^2&\cdots&(2\cos \theta_m)^2\\
 \vdots&\vdots&&\vdots\\
 (2\cos \theta_1)^{m-1}&(2\cos \theta_2)^{m-1}&\cdots&(2\cos\theta_m)^{m-1}\\
 \end{vmatrix}.
 \end{small}
\end{equation*}
This proves Eq.~\eqref{vdm}.
\end{proof}
\noindent\emph{Proof  of Theorem \ref{up}}  Using the notations in Remark \ref{usm}, we have
\begin{equation*}
[\xi_1,\ldots,\xi_{\frac{n}{2}-1};\xi_{\frac{n}{2}+1},\ldots,\xi_{n-1}]=\left(\sum\limits_{i=0}^{\frac{n}{4}-1}U^{4i}\right)[\tau_1,\ldots,\tau_{\frac{n}{2}-1};\tau_{\frac{2}{2}+1},\ldots,\tau_{n-1}].
\end{equation*}
Noting that $\sum_{i=0}^{\frac{n}{4}-1}U^{4i}$ is a unit upper triangular matrix, we find that the two matrices $[\xi_1,\ldots,\xi_{\frac{n}{2}-1};\xi_{\frac{n}{2}+1},\ldots,\xi_{n-1}]$ and $[\tau_1,\ldots,\tau_{\frac{n}{2}-1};\tau_{\frac{2}{2}+1},\ldots,\tau_{n-1}]$ have the same determinant. Combining this fact and  Lemma \ref{detcos} leads to
 \begin{eqnarray*}
&&\det[\xi_1,\ldots,\xi_{\frac{n}{2}-1};\xi_{\frac{n}{2}+1},\ldots,\xi_{n-1}]\\
&=&\begin{small}
	\begin{vmatrix}
			2\cos (n-3)\alpha_1&\cdots&2\cos (n-3)\alpha_{\frac{n}{2}-1}&2\cos (n-3)\alpha_{\frac{n}{2}+1}&\cdots&2\cos (n-3)\alpha_{n-1}\\
			\vdots&\vdots&\vdots&\vdots&&\vdots\\
			2\cos 2\alpha_1&\cdots&2\cos 2	\alpha_{\frac{n}{2}-1}&2\cos 2\alpha_{\frac{n}{2}+1}&\cdots&2\cos 2\alpha_{n-1}\\
			2\cos \alpha_1&\cdots&2\cos \alpha_{\frac{n}{2}-1}&2\cos \alpha_{\frac{n}{2}+1}&\cdots&2\cos \alpha_{n-1}\\
			1&\cdots&1&1&\cdots&1\\
	\end{vmatrix}
\end{small}\\
&=&(-1)^\frac{(n-2)(n-3)}{2}\prod\limits_{\substack{1\le k_1< k_2\le n-1\\k_1,k_2\neq \frac{n}{2}}}(2\cos\alpha_{k_2}-2\cos\alpha_{k_1}).
 \end{eqnarray*}
Note that $2\cos\alpha_k$'s are all eigenvalues of $B^\T$ and $\xi_k$'s are the corresponding eigenvectors. It follows from Lemmas \ref{relWA} and \ref{etxi} that
\begin{equation*}
\det W(B)=\frac{\prod\limits_{\substack{1\le k_1<k_2\le n-1\\k_1,k_2\neq\frac{n}{2}}} (2\cos \alpha_{k_2}-2\cos\alpha_{k_1})	\prod\limits_{\substack{k=1\\k\neq\frac{n}{2}}}^{n-1}e^\T\xi_k}{\det[\xi_1,\ldots,\xi_{\frac{n}{2}-1};\xi_{\frac{n}{2}+1},\ldots,\xi_{n-1}]}=\pm 2^{\frac{n}{2}-1}.
\end{equation*}
Thus, by Corollary \ref{wdb}, we have $\det \tilde{W}(D_n)=\pm 2^{\frac{n}{2}-1}$ and Theorem  \ref{up} follows.
\end{proof}

\section*{Declaration of competing interest}
There is no conflict of interest.

	\section*{Acknowledgments}
 The author would like to thank Zhibin Du for his constructive criticism of the manuscript. This work is supported by the	National Natural Science Foundation of China (Grant Nos. 12001006 and 11971406) and the Scientific Research Foundation of Anhui Polytechnic University (Grant No.\,2019YQQ024).

\end{document}